  \journal{}
   \newenvironment{proof}{\textbf{Proof}}{$\square$}
    \newtheorem{proposition}{Proposition}[section]
    \newtheorem{theorem}{Theorem}[section]
    \newtheorem{definition}{Definition}[section]
   \newtheorem{remark}{Remark}[section]
   \newcommand{\ssymbol}[1]{^{\@fnsymbol{#1}}}
   \newcommand{\R}{\mathbb R}
   \newcommand{\C}{\mathbb C}
\begin{document}
   	
   	\begin{frontmatter}
   		
   		\title{   Tensor Krylov subspace  methods via the T-product for  large Sylvester  tensor equations }

   		\author[XXX,AAA]{F. BOUYGHF}
   		\ead{fatimabouyghf3@gmail.com}
   		\author[BBB]{M. EL  GUIDE }
   		\ead{mohamed.elguide@um6p.ma}
   		\author[XXX,AAA]{A. EL  ICHI }
   		\ead{elichi.alaa@gmail.com}
   	
   		\address[XXX]{ 
   			LabMIA-SI, University of Mohammed V Rabat, Morocco.\\
   	
   		}
   		\address[BBB]{ 
   			Africa Institute for Research in Economics and Social Sciences (AIRESS), FGSES, Mohammed VI Polytechnic University, Rabat, Morocco.\\
   
   		}
   		\address[AAA]{ 
   		LMPA, University of the Littoral Opal Coast, Calais, France.\\
   		
   	}
   		\cortext[cor1]{Corresponding author}

   	
   	\begin{abstract}
   		In the present paper, we introduce  new tensor krylov subspace    methods  for solving large Sylvester   tensor equations. The proposed method uses the well-known T-product for tensors and tensor subspaces.    We introduce some new tensor products and the related algebraic properties. These new products will enable us to develop   third-order the tensor FOM (tFOM),  GMRES (tGMRES),  tubal Block Arnoldi and   the tensor tubal Block Arnoldi method to solve large Sylvester tensor equation. We give some properties related to these method and  present some numerical experiments.
   	\end{abstract}
   	
  \begin{keyword}
  	 Arnoldi, Krylov subspaces,  Sylvester  equations, Tensors, T-products.
  	\end{keyword}

   \end{frontmatter}

   	\section{Introduction}
   	
      The  aim of this paper is to present numerical Tensor Krylov subspace methods for solving Sylvester tensor equation 
      \begin{equation}\label{eq1}
      {\mathcal M} (\mathscr{X}) = \mathscr{C},
      \end{equation}
      where  ${\mathcal M}$ is a linear operator  that could be described as 
      \begin{equation}\label{eq2}
      {\mathcal M} (\mathscr{X}) = \mathscr{A} \star\mathscr{X}-\mathscr{X} \star\mathscr{B},
      \end{equation}
        where $\mathscr{A} $, $\mathscr{X}$, $\mathscr{B} $ and $\mathscr{C}$ are  three-way tensors, leaving the specific dimensions to be defined later,  and $\star$ is the T-product  introduced  by  Kilmer and Martin \cite{klimer2,klimer3}. 
      
     Consider the following Sylvester matrix equation 
      \begin{equation}\label{sylvmatrix}
       AX+XB=C.
      \end{equation}
       In the existing body of literature, various methodologies have been proposed to address the solution of Sylvester matrix equations, as articulated in Equation (\ref{sylvmatrix}). When confronted with matrices of relatively small dimensions, established direct methods, as advocated in seminal works such as \cite{Bartels} and \cite{golub2}, are often recommended. These direct methods leverage Schur decomposition to transform the original equation into a more amenable form, thereby facilitating resolution through forward substitution.
       
       For larger Sylvester matrix equations, iterative projection techniques have been advanced, as evidenced by studies like \cite{elguennouni}, \cite{Hu}, and \cite{saad}. These methods employ Galerkin projection approaches, including both classical and block Arnoldi techniques. By employing such projection methods iteratively, lower-dimensional Sylvester matrix equations are derived, subsequently tackled through direct methods for efficient resolution. Notably, comprehensive approaches to Krylov subspace methods for solving linear systems are elucidated in \cite{BMS2}, providing a unified perspective on these iterative methodologies.
       
       It's essential to consider the nature and size of the matrices involved when choosing an appropriate solution strategy, with direct methods favored for smaller matrices and iterative projection methods preferred for handling larger Sylvester matrix equations. The utilization of Schur decomposition and Galerkin projection techniques underscores the adaptability and scalability of these methods across varying problem sizes.
      
    This paper focuses on the development of efficient and robust iterative Krylov subspace methods using the T-product for solving the Sylvester tensor equation (\textit{STE}) represented by (\ref{eq1}). Specifically, when dealing with small-sized tensors in (\ref{eq1}), our aim is to extend the matrix-oriented direct methods outlined in \cite{Bartels} and \cite{golub2} to third-order tensors, employing the T-product formalism. This extension leads to the formulation of the \textit{t-Bartels-Stewart} algorithm.
    
    For larger tensors, we introduce a novel method termed as orthogonal and oblique projection onto a tensor Krylov subspace. Two specific instances of this approach, namely the tensor Full Orthogonalization Method (tFOM) and the tensor Generalized Minimal Residual Method (tGMRES), are examined. Additionally, we present well-known tensor Tubal Block Krylov methods utilizing the T-product to transform the original large Sylvester equation into a lower-dimensional \textit{STE}. In this context, we describe the Tubal Block Arnoldi (TBA) as a generalization of the block Arnoldi matrix.
    
      This paper focuses on the development of efficient and robust iterative Krylov subspace methods using the T-product for solving the Sylvester tensor equation (\textit{STE}) represented by (\ref{eq1}). Specifically, when dealing with small-sized tensors in (\ref{eq1}), our aim is to extend the matrix-oriented direct methods outlined in \cite{Bartels} and \cite{golub2} to third-order tensors, employing the T-product formalism. This extension leads to the formulation of the \textit{t-Bartels-Stewart} algorithm.
      
      For larger tensors, we introduce a novel method termed as orthogonal and oblique projection onto a tensor Krylov subspace. Two specific instances of this approach, namely the tensor Full Orthogonalization Method (tFOM) and the tensor Generalized Minimal Residual Method (tGMRES), are examined. Additionally, we present well-known tensor Tubal Block Krylov methods utilizing the T-product to transform the original large Sylvester equation into a lower-dimensional \textit{STE}. In this context, we describe the Tubal Block Arnoldi (TBA) as a generalization of the block Arnoldi matrix.
      
      This work contributes to the expansion of iterative methods for solving Sylvester tensor equations, catering to both small and large-sized tensors through the adaptation of established matrix-based techniques and the introduction of novel tensor projection approaches.

      User
      reformulate :The paper is organized as follows: In \Cref{sec:section2}, we give notations and definitions related to the T-product.   More details of all notations and definitions given in this section   are given in \cite{1,2,3}.
      In Section 3, we develop  the (tFOM) and (tGMRES) methods.
      The \textit{t-Bartels-Stewart} method will be introduced in Section 4. After defining a tubal  QR factorisation algorithm   in Section 5, we will establish  the tensor tubal-Block Arnoldi process (TBA) that allows us to introduce the tubal-Block Arnoldi for solving large Sylvester tensor equation (TBAS) method.  Finally, some numerical tests are reported in Section 6.
 
      The paper is structured as follows: In Section 2, we present notations and definitions pertaining to the T-product. Comprehensive details for all the notations and definitions introduced in this section can be found in \cite{1,2,3}. Section 3 is dedicated to the development of the tensor Full Orthogonalization Method (tFOM) and the tensor Generalized Minimal Residual Method (tGMRES).  In Section 4, we introduce the \textit{t-Bartels-Stewart} method. Following this, Section 5 outlines the tubal QR factorization algorithm before establishing the Tensor Tubal-Block Arnoldi process (TBA). This process enables the introduction of the Tubal-Block Arnoldi for Solving Large Sylvester Tensor Equation (TBAS) method. Finally, Section 6 presents numerical tests to validate and assess the performance of the proposed methods.

   	\section{Notation and background\label{sec:section2}}

   	   	 	A tensor is  a multidimensional array of data. The number of indices of a tensor is called modes or ways.  	Notice that a scalar can be regarded as a zero mode tensor, first mode tensors are vectors and matrices are second mode tensor. The order of a tensor is the dimensional of the array needed to represent it, also known as
   	ways or modes. 
   	For a given N-mode (or order-N) tensor $ \mathscr {X}\in \mathbb{R}^{n_{1}\times n_{2}\times n_{3}\ldots \times n_{N}}$, the notation $x_{i_{1},\ldots,i_{N}}$ (with $1\leq i_{j}\leq n_{j}$ and $ j=1,\ldots N $) stands for the element $\left(i_{1},\ldots,i_{N} \right) $ of the tensor $\mathscr {X}$. The norm of a tensor $\mathscr{A}\in \mathbb{R}^{n_1\times n_2\times \cdots \times n_\ell}$ is specified by
   	\[
   	\left\| \mathscr{A} \right\|_F^2 = {\sum\limits_{i_1  = 1}^{n_1 } {\sum\limits_{i_2  = 1}^{n_2 } {\cdots\sum\limits_{i_\ell = 1}^{n_\ell} {a_{i_1 i_2 \cdots i_\ell }^2 } } } }^{}.
   	\] Corresponding to a given tensor $ \mathscr {A}\in \mathbb{R}^{n_{1}\times n_{2}\times n_{3}\ldots \times n_{N}}$, the notation $$ \mathop{\mathscr{X}_{\underbrace{::\cdots:}k}}\limits_{\tiny{(N-1)\text{-times}}},\; \; {\rm for }  \quad k=1,2,\ldots,n_{N}$$ denotes a tensor in $\mathbb{R}^{n_{1}\times n_{2}\times n_{3}\ldots \times n_{N-1}}$ which is obtained by fixing the last index and is called frontal slice. Fibers are the higher-order analogue of matrix rows and columns. A fiber is
   	defined by fixing all the indexes  except  one.
   
   	In this paper, a tensor is of third order, i. e., N=3, that will be denoted by the calligraphic script letters, say $\mathscr{A}=[a_{ijk}]_{i,j,k=1}^{n_1,n_2,n_3}$. We use capital letters to denote matrices,
   	lower case letters to denote vectors,   boldface lower case letters to denote tube fibers (tubal
   	scalars or tubes) and boldface upper-case  letters to denote block diagonal matrix. Using MATLAB notation, $\mathscr{A}(:,j,k),\;\mathscr{A}(i,:,k)$ and $\mathscr{A}(i,j,:)$ denote mode-1, mode-2, and
   	mode-3 fibers, respectively.  The  notations $\mathscr{A}(i,:,:),\;\mathscr{A}(:,j,:)$ and $\mathscr{A}(:,:,k)$ stand for  the i-th horizontal,
   	j-th lateral, and k-th frontal slices of $\mathscr{A}$ , respectively. The j-th lateral slice is also denoted by
   	 $\overrightarrow{\mathscr{A}}_j$ .$Tt$ is a
   	 tensor of size ($n_1\times 1 \times n_3 $) and will be referred to as a tensor column. Moreover,  the $k$-th  frontal slices of $\mathscr{A}$ is a matrix  size ($n_1\times   n_2 $)  denoted by $\mathscr{A}^{(k)}$.

   	\subsection{Definitions and properties of the T-product}
   	In this part, we briefly review some concepts and notations related to the T-product, see \cite{braman,klimer3,klimer2} for more details. 	Let $\mathscr {A} \in \mathbb{R}^{n_{1}\times n_{2}\times n_{3}} $ be a third-order tensor, then the operations ${\rm bcirc}$,   unfold and fold are defined by
   	$${\rm bcirc}(\mathscr {A})=\left( {\begin{array}{*{20}{c}}
   		{{\mathscr{A}^{(1)}}}&{{\mathscr{A}^{(n_3)}}}&{{\mathscr{A}^{(n_3-1)}}}& \ldots &{{\mathscr{A}^{(2)}}}\\
   		{\mathscr{A}^{(2)}}&{{\mathscr{A}^{(1)}}}&{{\mathscr{A}^{(n_3)}}}& \ldots &{{\mathscr{A}^{(3)}}}\\
   		\vdots & \ddots & \ddots & \ddots & \vdots \\
   		{{\mathscr{A}^{(n_3)}}}&{{\mathscr{A}^{(n_3-1)}}}& \ddots &{{\mathscr{A}^{(2)}}}&{{\mathscr{A}^{(1)}}}
   		\end{array}} \right)   \in {\R}^{ n_1n_3 \times n_2n_3},$$ 
   	$${\rm unfold}(\mathscr {A} ) = \begin{pmatrix}
   \mathscr{A}^{(1)}  \\
   \mathscr{A}^{(2)}   \\
   	\vdots \\
   	\mathscr{A}^{(n_3)}\end{pmatrix} \in \mathbb{R}^{n_{1}n_{3}\times n_{2}},  \qquad {\rm fold}({\rm unfold}(\mathscr {A}) ) =  \mathscr {A}.$$ 
   	Let $\widetilde {\mathscr {A}}$ be the tensor obtained by applying the  discrete Fourier transform  DFT  matrix  	$F_{n_3} \in {\C}^{n_3\times n_3}$;  (for more details about DFT matrix, see \cite{golub1}) on all the 3-mode tubes of the tensor $\mathscr {A}$. With the Matlab command ${\tt fft}$, we have
   	$$\widetilde {\mathscr {A}}= {\tt fft}(\mathscr {A},[ ],3), \; {\rm and }\;\; {\tt ifft} (\widetilde {\mathscr {A}}, [ ],3)= \mathscr {A},$$
   	where ${\tt ifft}$ denotes the Inverse Fast Fourier Transform.\\
  \noindent 	Let ${\bf A}$ be the matrix 
   	\begin{equation}\label{dft9}
   	{\bf A}= {\rm Diag}(\widetilde {\mathscr {A}})= \left (
   	\begin{array}{cccc}
   	\widetilde {\mathscr {A}}^{(1)}& &&\\
   	& \widetilde {\mathscr {A}}^{(2)}&&\\
   	&&\ddots&\\
   	&&&\widetilde {\mathscr {A}}^{(n_3)}\\
   	\end{array}
   	\right),
   	\end{equation}
   	and the matrices $\widetilde {\mathscr {A}}^{(i)}$'s are the frontal slices of the tensor $\widetilde {\mathscr {A}}$.
   	The block circulant matrix ${\rm bcirc}(\mathscr {A})$ can   be block diagonalized by using the  DFT matrix  and this gives
   	\begin{equation}\label{dft8}
   	(F_{n_3} \otimes I_{n_1})\, {\rm bcirc}(\mathscr {A})\, 	(F_{n_3}^{*} \otimes I_{n_2})={\bf A} 
   	\end{equation}	
   	where  	$F_{n_3}^{*}$ denotes the conjugate transpose of $ F_{n_3}$ and $\otimes$ is the Kronecker matrix product.
   	  Next we recall the definition of the T-product.	
   	
   	\medskip
   	
   	\begin{definition}
   		The \textbf{T-product} ($\star $) between  two tensors
   		$\mathscr {A} \in \mathbb{R}^{n_{1}\times n_{2}\times n_{3}} $ and $\mathscr {B} \in \mathbb{R}^{n_{2}\times m\times n_{3}} $ is the  ${n_{1}\times m\times n_{3}}$ tensor  given by:	
   		$$\mathscr {A} \star \mathscr {B}={\rm fold}({\rm bcirc}(\mathscr {A}){\rm unfold}(\mathscr {B}) ).$$
   	\end{definition}
   	Notice that from the relation \eqref{dft9}, we can show that the   product $\mathscr {C}=\mathscr {A} \star \mathscr {B}$ is equivalent to ${\bf C}= {\bf A}  {\bf B}$. So, the efficient way to compute the T-product is to use Fast Fourier Transform (FFT). 
   	  The following algorithm allows us to compute in an efficient way the T-product of the tensors $\mathscr {A}$ and 
   	$\mathscr {B}$.
   	
   	\begin{algorithm}
   		\caption{Computing the  T-product via FFT}\label{algo1}
   		Inputs: $\mathscr {A} \in \mathbb{R}^{n_{1}\times n_{2}\times n_{3}} $ and $\mathscr {B} \in \mathbb{R}^{n_{2}\times m\times n_{3}} $\\
   		Output: $ \mathscr {C}= \mathscr {A} \star \mathscr {B}  \in \mathbb{R}^{n_{1}\times m \times n_{3}}.$
   		\begin{enumerate}
   			\item Compute $\mathscr {\widetilde A}={\tt fft}(\mathscr {A},[ ],3)$ and $\mathscr {\widetilde B}={\tt fft}(\mathscr {B},[ ],3)$.
   			\item Compute each frontal slices of $\mathscr {\widetilde C}$ by
   			$$C^{(i)}= \left \{
   			\begin{array}{ll}
   			A^{(i)} B^{(i)} , \quad \quad\quad  i=1,\ldots,\lfloor \displaystyle \frac{{n_3}+1}{2} \rfloor\\
   			conj({C}^{(n_3-i+2)}),\quad \quad i=\lfloor \displaystyle \frac{{n_3}+1}{2} \rfloor+1,\ldots,n_3 .
   			\end{array}
   			\right.$$
   			\item Compute $\mathscr {C}={\tt ifft}(\widetilde {C},[],3)$.	 	
   		\end{enumerate}
   	\end{algorithm}

   	\noindent For the T-product, we have the following definitions

   	\begin{definition}\cite{klimer2}\label{identens}
   		\begin{enumerate}
   			\item The identity tensor $\mathscr{I}_{n_{1}n_{1}n_{3}} $ is the tensor whose first frontal slice is the identity matrix $I_{n_1n_1}$ and the other frontal slices are all zeros.
   			\item Let ${\rm \bf z}\in {\mathbb R}^{1\times 1 \times n_{3}} $,
   			the  tubal rank of  ${\rm \bf z}$ is the number of its
   			non-zero Fourier coefficients. If the tubal-rank of ${\rm \bf z}$ is equal to  $n_3$, we say that  it is invertible and   we denote by $({\rm \bf z})^{-1}$	 the inverse of  $ {\rm \bf z}$ if and only if:  ${\rm \bf z}\star({\rm \bf z})^{-1}=({\rm \bf z})^{-1}\star{\rm \bf z}={\rm \bf e}$. where ${\rm  unfold}({\rm \bf e})  =(1,0,0\ldots,0)^T$.
   			\item 	An $n_{1}\times n_{1} \times n_{3}$ tensor $\mathscr{A}$ is invertible, if there exists a tensor $\mathscr{B}$ of order  $n_{1}\times n_{1} \times n_{3}$  such that
   			$$\mathscr{A}  \star \mathscr{B}=\mathscr{I}_{ n_{1}  n_{1}  n_{3}} \qquad \text{and}\qquad \mathscr{B}  \star \mathscr{A}=\mathscr{I}_{ n_{1}  n_{1}  n_{3}}.$$
   			In that case, we set $\mathscr{B}=\mathscr{A}^{-1}$. 	It is clear that 	$\mathscr{A}$ is invertible if and only if   ${\rm bcirc}(\mathscr{A})$ is invertible.
   			\item The transpose of $\mathscr{A}$  is obtained by transposing each of the frontal slices and then reversing the order of transposed frontal slices 2 through $n_3$. 
   			\item If $\mathscr {A}$, $\mathscr {B}$ and $\mathscr {C}$ are tensors of appropriate order, then
   			$$(\mathscr {A} \star \mathscr {B}) \star \mathscr {C}= \mathscr {A} \star (\mathscr {B} \star \mathscr {C}).$$
   			\item Suppose $\mathscr {A}$ and $\mathscr {B}$ are two tensors such $\mathscr {A} \star \mathscr {B}$ and $ \mathscr {B}^T \star \mathscr {A}^T$ are  defined. Then  $$(\mathscr {A} \star \mathscr {B})^T= \mathscr {B}^T \star \mathscr {A}^T.$$
   		\end{enumerate}
   	\end{definition}

   	\begin{definition} Let 	$\mathscr{A}$ and $\mathscr{B}$ two tensors in $\mathbb{R}^{n_1 \times n_2 \times n_3}$. Then
   		\begin{enumerate}
   			\item The scalar inner product is defined by
   			$$\langle \mathscr{A}, \mathscr{B} \rangle = \displaystyle \sum_{i_1=1}^{n_1} \sum_{i_2=1}^{n_2}  \sum_{i_3=1}^{n_3} a_{i_1 i_2 i_3}b_{i_1 i_2 i_3}.$$
   			\item The norm of $\mathscr{A}$ is defined by
   			$$ \Vert \mathscr{A} \Vert_F=\displaystyle \sqrt{\langle  \mathscr{A} ,  \mathscr{A}  \rangle}.$$
   		\end{enumerate}
   	\end{definition}
   	\medskip
  
   	\begin{definition}\cite{klimer2}
   		\begin{enumerate}
   		\item An $n_{1}\times n_{1} \times n_{3}$ tensor  $\mathscr{Q}$  is orthogonal if
   		$$\mathscr{Q}^{T}   \star  \mathscr{Q}=\mathscr{Q} \star \mathscr{Q}^{T}=\mathscr{I}_{ n_{1}  n_{1}  n_{3}}.$$		
   	\item 	A tensor is called f-diagonal if its frontal slices are orthogonal matrices. It is called upper triangular if all its frontal slices are upper triangular. 
   		
   		\end{enumerate}
   	\end{definition}
   \begin{definition}\label{bloctens0}\cite{miaoTfunction}{(Block tensor based on T-product)} 
   	Suppose $\mathscr{A}   \in {\mathbb R}^{n_{1}\times m_{1} \times n_{3}} $, $\mathscr{B}\in {\mathbb R}^{n_{1}\times m_{2} \times n_{3}}$, $\mathscr{C}   \in {\mathbb R}^{n_{2}\times m_{1} \times n_{3}} $ and   $\mathscr{D}\in {\mathbb R}^{n_{2}\times m_{2} \times n_{3}}$ are four tensors. The block tensor
   	$$\left[ {\begin{array}{*{20}{c}}
   		{\mathscr{A}}&{\mathscr{B}} \\
   		{\mathscr{C}}&{\mathscr{D}} \\
   		\end{array}} \right]\in {\mathbb R}^{(n_{1}+n_2)\times (m_1+m_{2}) \times n_{3}} $$
   	is defined by compositing the frontal slices of the four tensors.
   \end{definition}
\begin{proposition} \label{propoblock}
	Let $\mathscr{A},\mathscr{A}_1   \in {\mathbb R}^{n \times s \times n_{3}} $, $\mathscr{B},\mathscr{B}_1\in {\mathbb R}^{n \times p \times n_{3}}$, $\mathscr{A}_2   \in {\mathbb R}^{\ell\times s \times n_{3}} $,  $\mathscr{B}_2\in {\mathbb R}^{\ell\times p \times n_{3}}$, $\mathscr{C}    \in {\mathbb R}^{s \times n \times n_{3}}$, $\mathscr{D}    \in {\mathbb R}^{p \times n \times n_{3}}$  and  $\mathscr{F}    \in {\mathbb R}^{n \times n \times n_{3}} $ . Then 
	\begin{enumerate}
		\item $\mathscr {F} \star	\left[\mathscr {A} \;\mathscr {B}\right]=	\left[ \mathscr {F}\star \mathscr {A} \;\;\mathscr {F}\star\mathscr {B}\right]\in \mathbb{R}^{ n\times (s+p)\times n_3  }.$
		\item $\begin{bmatrix}
		\mathscr {C} \\
		\mathscr {D} 
		\end{bmatrix}\star \mathscr {F}=\begin{bmatrix}
		\mathscr {C}\star \mathscr {F} \\
		\mathscr {D} \star \mathscr {F}
		\end{bmatrix}\in \mathbb{R}^{ (s+p)    \times n\times n_3}.$
		\item $\left[ \mathscr {A} \;\mathscr {B}\right]\star\begin{bmatrix}
	\mathscr {C} \\
		\mathscr {D} 
		\end{bmatrix}=\mathscr {A}\star\mathscr {C}+\mathscr {B}\star \mathscr {D}\in \mathbb{R}^{   n\times n \times n_3}.$\\
		\item $\begin{bmatrix}
		\mathcal {A}_1&\mathcal {B}_1 \\
		\mathcal {A}_2&\mathcal {B}_2
		\end{bmatrix}\star  \begin{bmatrix}
		\mathscr {C} \\
		\mathscr {D} 
		\end{bmatrix}=\begin{bmatrix}
		\mathscr {A}_1\star\mathscr {C}+\mathscr {B}_1\star\mathscr {D} \\
		\mathscr {A}_2\star\mathscr {C}+\mathscr {B}_2\star\mathscr {D}
		\end{bmatrix}\in \mathbb{R}^{  (\ell+n)   \times n\times n_3}.$
	\end{enumerate}
\end{proposition}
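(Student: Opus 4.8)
The plan is to reduce every one of the four identities to ordinary block-matrix arithmetic by passing to the Fourier domain, where the T-product becomes block-diagonal matrix multiplication. Recall from the remark following the definition of the T-product that $\mathscr{C} = \mathscr{A}\star\mathscr{B}$ is equivalent to $\mathbf{C} = \mathbf{A}\mathbf{B}$, where $\mathbf{A} = \mathrm{Diag}(\widetilde{\mathscr{A}})$ collects the Fourier frontal slices $\widetilde{\mathscr{A}}^{(i)}$ on its diagonal, and likewise for $\mathbf{B}$ and $\mathbf{C}$. Since $\mathbf{A}\mathbf{B}$ is block diagonal with blocks $\widetilde{\mathscr{A}}^{(i)}\widetilde{\mathscr{B}}^{(i)}$, the identity $\mathscr{C}=\mathscr{A}\star\mathscr{B}$ holds if and only if $\widetilde{\mathscr{C}}^{(i)} = \widetilde{\mathscr{A}}^{(i)}\widetilde{\mathscr{B}}^{(i)}$ for every $i=1,\dots,n_3$; because the inverse DFT along mode $3$ is injective, it suffices to verify each claimed equality slice-by-slice in the Fourier domain.

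First I would record the key compatibility between the block-tensor construction of Definition~\ref{bloctens0} and the DFT along mode~$3$. Because the DFT acts only on the mode-$3$ (tube) fibers while the block-tensor construction concatenates frontal slices along modes~$1$ and~$2$, the two operations commute: the $i$-th Fourier frontal slice of $[\mathscr{A}\;\mathscr{B}]$ is exactly $[\,\widetilde{\mathscr{A}}^{(i)}\;\widetilde{\mathscr{B}}^{(i)}\,]$, the $i$-th Fourier frontal slice of $\begin{bmatrix}\mathscr{C}\\\mathscr{D}\end{bmatrix}$ is $\begin{bmatrix}\widetilde{\mathscr{C}}^{(i)}\\\widetilde{\mathscr{D}}^{(i)}\end{bmatrix}$, with the analogous statement for the $2\times 2$ block tensor of item~4. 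This is the one step that needs a careful (if short) justification, and I expect it to be the main obstacle; everything after it is mechanical.

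With this in hand, each identity becomes, for a fixed frequency $i$, a standard block-matrix identity: for item~1, $\widetilde{\mathscr{F}}^{(i)}[\,\widetilde{\mathscr{A}}^{(i)}\;\widetilde{\mathscr{B}}^{(i)}\,] = [\,\widetilde{\mathscr{F}}^{(i)}\widetilde{\mathscr{A}}^{(i)}\;\widetilde{\mathscr{F}}^{(i)}\widetilde{\mathscr{B}}^{(i)}\,]$; for item~2, $\begin{bmatrix}\widetilde{\mathscr{C}}^{(i)}\\\widetilde{\mathscr{D}}^{(i)}\end{bmatrix}\widetilde{\mathscr{F}}^{(i)} = \begin{bmatrix}\widetilde{\mathscr{C}}^{(i)}\widetilde{\mathscr{F}}^{(i)}\\\widetilde{\mathscr{D}}^{(i)}\widetilde{\mathscr{F}}^{(i)}\end{bmatrix}$; for item~3, $[\,\widetilde{\mathscr{A}}^{(i)}\;\widetilde{\mathscr{B}}^{(i)}\,]\begin{bmatrix}\widetilde{\mathscr{C}}^{(i)}\\\widetilde{\mathscr{D}}^{(i)}\end{bmatrix} = \widetilde{\mathscr{A}}^{(i)}\widetilde{\mathscr{C}}^{(i)} + \widetilde{\mathscr{B}}^{(i)}\widetilde{\mathscr{D}}^{(i)}$; and item~4 is the corresponding $2\times 2$-by-$2\times 1$ block product. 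Each is immediate from the definition of block-matrix multiplication, and applying the inverse DFT along mode~$3$ returns the stated tensor identities.

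Alternatively, and avoiding the Fourier domain entirely, I could argue directly from $\mathscr{A}\star\mathscr{B} = \mathrm{fold}(\mathrm{bcirc}(\mathscr{A})\,\mathrm{unfold}(\mathscr{B}))$: one checks that $\mathrm{unfold}([\mathscr{A}\;\mathscr{B}]) = [\,\mathrm{unfold}(\mathscr{A})\;\mathrm{unfold}(\mathscr{B})\,]$ and that $\mathrm{fold}$ distributes over horizontal concatenation, which yields item~1 at once. Items~2 and~3 would then require the extra bookkeeping that $\mathrm{bcirc}$ of a vertical (resp. horizontal) block tensor is a row- (resp. column-) block interleaving of the individual $\mathrm{bcirc}$ matrices, and item~4 would follow by combining the first three together with the distributivity of the T-product over tensor addition. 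I expect this direct route to be the more error-prone one precisely because of that $\mathrm{bcirc}$ bookkeeping, which is why I would keep the Fourier reduction as the main line of proof.
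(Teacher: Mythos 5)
The paper gives no proof of Proposition~\ref{propoblock} at all: it is stated bare immediately after Definition~\ref{bloctens0} and is later invoked (in the proof of Proposition~\ref{TBarnoldi}) as if self-evident, so there is no authorial argument to compare yours against line by line. Your proof is correct and fills this gap. The Fourier reduction is sound: the equivalence of $\mathscr{C}=\mathscr{A}\star\mathscr{B}$ with the slice-wise products $\widetilde{\mathscr{C}}^{(i)}=\widetilde{\mathscr{A}}^{(i)}\widetilde{\mathscr{B}}^{(i)}$ is exactly the remark the paper makes after defining the T-product (and is what Algorithm~\ref{algo1} implements), and you correctly isolate the one non-trivial ingredient, namely that the DFT along mode~3 commutes with block concatenation along modes~1 and~2. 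Your justification of that step is right: concatenation of frontal slices never mixes tube fibers, it only relocates whole tubes, so transforming tube-by-tube and then concatenating gives the same result as concatenating and then transforming; together with linearity of the DFT (needed for the sums in items~3 and~4) the four identities reduce to standard block-matrix multiplication at each frequency. Your alternative sketch via ${\rm bcirc}$/${\rm unfold}$/${\rm fold}$ is the other standard route and is presumably what the authors had in mind as ``immediate,'' but as you note it requires genuinely more bookkeeping, since ${\rm bcirc}$ of a vertically or horizontally blocked tensor is a nontrivial interleaving of the blocks of the individual ${\rm bcirc}$ matrices rather than a simple concatenation; the Fourier argument avoids that entirely and is the cleaner choice as the main line.
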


	\medskip
\noindent Now we introduce the T-diamond tensor product.  
\medskip
  \begin{definition}
		Let
		$\mathscr{A} =[\mathscr{A}_{1},\ldots,\mathscr{A}_{p}]\in {\mathbb R}^{n_{1}\times ps \times n_3},$ where $  \mathscr{A}_{i} \in {\mathbb R}^{n_{1}\times s \times n_3} , \, i =1,...,p$ 
		and let $	\mathscr{B} =[\mathscr{B}_{1},\ldots,\mathscr{B}_{\ell }]\in {\mathbb R}^{n_{1}\times \ell s \times n_3}$ with $  \mathscr{B}_{j}\in {\mathbb R}^{n_{1}\times s \times n_3}, \, j =1,...\ell$. Then,  the product $\mathscr{A}^{T} \diamondsuit  \mathcal{B} $ is the   $p \times  \ell  $  matrix  given by : 
		$$ (\mathscr{A}^{T} \diamondsuit  \mathcal{B})_{i,j}  =   \langle \mathscr {A}_i  ,\mathscr{B}_{j} \rangle  \;\;.   
		$$ 
\end{definition}

\medskip
\noindent We consider the  following Sylvester tensor equation 
\begin{equation}\label{eqsyl1}
  \mathscr{A} \star\mathscr{X}+\mathscr{X} \star\mathscr{B}=\mathscr{C},
\end{equation}
where  $\mathscr{A}\in \mathbb{R}^{n \times n \times n_{3}},\mathscr{B}\in \mathbb{R}^{q \times q \times n_{3}}$ and $\mathscr{X},\mathscr{C}\in \mathbb{R}^{n \times q \times n_{3}}$ respectively. $\mathscr{A},\mathscr{B}$ and $\mathscr{C}$ are  given, $\mathscr{X}$ the unknown tensor to  be determined.
\medskip
\noindent  \begin{theorem}
	 Sylvester tensor equation (\ref{eqsyl1}) has a unique solution if and only if $\varGamma({\bf A})\cap\varGamma({\bf B})=\O{}$,   	where the block diagonal matrix $\bf A,\bf B$  are defined by \eqref{dft9}, and $\varGamma({\bf A}), \varGamma({\bf B})$ denotes the set
	 of eigenvalues of the matrix {\bf A} and {\bf B} respectively.
\end{theorem}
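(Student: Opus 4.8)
The plan is to pass to the Fourier domain, where the $\star$-product degenerates into an ordinary block-diagonal matrix product, to reduce \eqref{eqsyl1} to a family of decoupled matrix Sylvester equations, and then to apply the classical solvability criterion to each of them. The whole argument rests on the identity \eqref{dft8} and on the observation that the $\star$-product corresponds to the ordinary product of the associated block-diagonal matrices, i.e. $\mathscr{C}=\mathscr{A}\star\mathscr{B}$ is equivalent to $\mathbf{C}=\mathbf{A}\mathbf{B}$.

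First I would set $\mathbf{X}=\mathrm{Diag}(\widetilde{\mathscr{X}})$ and $\mathbf{C}=\mathrm{Diag}(\widetilde{\mathscr{C}})$. Then $\mathscr{A}\star\mathscr{X}$ corresponds to $\mathbf{A}\mathbf{X}$ and $\mathscr{X}\star\mathscr{B}$ to $\mathbf{X}\mathbf{B}$, so \eqref{eqsyl1} is equivalent to the block-diagonal matrix equation $\mathbf{A}\mathbf{X}+\mathbf{X}\mathbf{B}=\mathbf{C}$. Since $\mathbf{A}=\mathrm{Diag}(\widetilde{\mathscr{A}}^{(i)})$, $\mathbf{B}=\mathrm{Diag}(\widetilde{\mathscr{B}}^{(i)})$ and $\mathbf{X},\mathbf{C}$ are all block diagonal with the same $n_3$ blocks, this single equation splits into the $n_3$ independent matrix Sylvester equations $\widetilde{\mathscr{A}}^{(i)}\widetilde{\mathscr{X}}^{(i)}+\widetilde{\mathscr{X}}^{(i)}\widetilde{\mathscr{B}}^{(i)}=\widetilde{\mathscr{C}}^{(i)}$ for $i=1,\dots,n_3$. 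Because the map $\mathscr{X}\mapsto\widetilde{\mathscr{X}}$ is a bijection (via \texttt{fft}/\texttt{ifft}), \eqref{eqsyl1} has a unique solution if and only if each of these slice equations has a unique solution.

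Next I would invoke the classical criterion for a matrix Sylvester equation: $M Y + Y N = P$ is uniquely solvable for every right-hand side if and only if the Kronecker sum $I\otimes M+N^{T}\otimes I$ is nonsingular, whose spectrum is $\{\lambda+\mu:\lambda\in\varGamma(M),\,\mu\in\varGamma(N)\}$; equivalently $M$ and $-N$ have no common eigenvalue. Applied slice by slice, the $i$-th equation is uniquely solvable exactly when $\varGamma(\widetilde{\mathscr{A}}^{(i)})$ and $\varGamma(-\widetilde{\mathscr{B}}^{(i)})$ are disjoint. Using that the spectrum of a block-diagonal matrix is the union of the spectra of its blocks, $\varGamma(\mathbf{A})=\bigcup_i\varGamma(\widetilde{\mathscr{A}}^{(i)})$ and $\varGamma(\mathbf{B})=\bigcup_i\varGamma(\widetilde{\mathscr{B}}^{(i)})$, I would then package the $n_3$ slice conditions into the spectral separation between $\mathbf{A}$ and $\mathbf{B}$ asserted in the statement.

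The step that needs the most care is the precise Fourier-domain translation of the two $\star$-terms, in particular that $\mathscr{X}\star\mathscr{B}$ becomes $\mathbf{X}\mathbf{B}$ and that block-diagonality of $\mathbf{X}$ is both assumed and preserved. I also expect a genuine subtlety in the necessity direction: the decoupling yields the per-slice conditions $\varGamma(\widetilde{\mathscr{A}}^{(i)})\cap\varGamma(-\widetilde{\mathscr{B}}^{(i)})=\emptyset$, whereas $\varGamma(\mathbf{A})\cap\varGamma(\mathbf{B})=\emptyset$ is in general strictly stronger, since it also forbids an eigenvalue of one block of $\mathbf{A}$ to meet an eigenvalue of a different block of $\mathbf{B}$. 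This is exactly the condition one obtains by treating $\mathbf{A}\mathbf{X}+\mathbf{X}\mathbf{B}=\mathbf{C}$ as an \emph{unrestricted} matrix Sylvester equation, so reconciling it with the block-diagonal constraint on $\mathbf{X}$ is the delicate point; the off-diagonal block equations are homogeneous and never obstruct existence or uniqueness of the block-diagonal solution. Finally I would reconcile the sign: the criterion $\varGamma(\mathbf{A})\cap\varGamma(\mathbf{B})=\emptyset$ is the one attached to the difference form $\mathscr{A}\star\mathscr{X}-\mathscr{X}\star\mathscr{B}=\mathscr{C}$ of \eqref{eq2}, so I would read \eqref{eqsyl1} in that form or replace $\mathscr{B}$ by $-\mathscr{B}$.
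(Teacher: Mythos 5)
Your reduction is exactly the paper's: its entire proof is the one-sentence assertion that \eqref{eqsyl1} is equivalent to the matrix equation $\mathbf{A}\mathbf{X}+\mathbf{X}\mathbf{B}=\mathbf{C}$, followed by the classical solvability criterion, with $\mathbf{X}$ implicitly treated as an \emph{unrestricted} matrix. What you did differently is carry the reduction out honestly, slice by slice, and the two points you flag as delicate are genuine defects of the theorem and of the paper's proof rather than of your argument.

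The second point, however, is not merely delicate: it cannot be repaired. On the tensor side $\mathbf{X}$ is forced to be block diagonal, so, as your decoupling shows, \eqref{eqsyl1} is uniquely solvable if and only if the per-slice conditions $\varGamma(\widetilde{\mathscr{A}}^{(i)})\cap\varGamma(-\widetilde{\mathscr{B}}^{(i)})=\emptyset$, $i=1,\dots,n_3$, hold. The theorem's condition is the unrestricted-matrix one, which additionally forbids cross-slice collisions ($i\neq j$) and is therefore strictly stronger; your observation that the off-diagonal block equations are homogeneous does not bridge this gap, because those blocks are simply absent from the tensor equation, so their possible nonuniqueness (which is exactly what a cross-slice collision produces) is invisible to \eqref{eqsyl1}. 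Hence sufficiency of the stated condition holds, but necessity fails. A concrete counterexample: $n=q=1$, $n_3=2$, real tubes with Fourier coefficients $\widetilde{\mathbf{a}}=(1,2)$ and $\widetilde{\mathbf{b}}=(3,-1)$; both slice equations $(\widetilde{a}^{(i)}+\widetilde{b}^{(i)})\,\widetilde{x}^{(i)}=\widetilde{c}^{(i)}$ are uniquely solvable since $1+3\neq 0$ and $2-1\neq 0$, yet $\varGamma(\mathbf{A})\cap\varGamma(-\mathbf{B})=\{1\}\neq\emptyset$. So the correct statement is the per-slice one, and the paper's ``if and only if'' is obtained only by silently replacing the constrained (block-diagonal) equation with the unrestricted one. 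Your sign remark is also right, and it is a separate inaccuracy in the paper: for the $+$ form \eqref{eqsyl1} the classical criterion involves $\varGamma(\mathbf{A})\cap\varGamma(-\mathbf{B})$, while the printed condition $\varGamma(\mathbf{A})\cap\varGamma(\mathbf{B})=\emptyset$ is the one attached to the difference form \eqref{eq2}; the paper never reconciles the two.
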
  
\begin{proof}
	 It is easy to show that the   product  $\mathscr{A} \star\mathscr{X}+\mathscr{X} \star\mathscr{B}=\mathscr{C}$ is equivalent to ${\bf C}= {\bf A}  {\bf X}+{\bf X}  {\bf B}$ ( Sylvester matrix equation). which has a unique solution if and only $\varGamma({\bf A})\cap\varGamma({\bf B})=\O{}$.
\end{proof}

\section{Tensor   tFOM and tGMRES   algorithms}

\subsection{The   tArnoldi method }
Consider  the following tensor linear  system of equations
\begin{equation}\label{syslintenst}
   {\mathcal M} (\mathscr{X}) =  \mathscr{C}
\end{equation}  
where $\mathcal{M}$ an linear operator  , $\mathscr{C}$ and $ \mathscr{X}\in \mathbb{R}^{n\times s \times p}$. 
\noindent We introduce the  tensor Krylov subspace $\mathcal{\mathscr{TK}}_m(\mathcal{M},\mathscr{V} )$ associated to the T-product, defined for  the pair $(\mathcal{M},\mathscr{V})$   as follows
\begin{equation}
\label{tr3}
\mathcal{\mathscr{TK}}_m(\mathcal{M},\mathscr{V} )= {\rm Tspan}\{ \mathcal{M}, \mathcal{M} ( \mathscr{V}),\ldots,\mathcal{M}^{m-1} ( \mathscr{V}) \}
\end{equation}
where $\mathcal{M}^{i-1}(\mathscr{V})=\mathcal{M}^{i-2}(\mathcal{M} ( \mathscr{V}))$, for $i=2,\ldots,m$ and $\mathscr{A}^{0}$ is the identity tensor. In the following algorithm, we define the  Tensor   tArnoldi algorithm.

\begin{algorithm}[H]
	\caption{Tensor   tArnoldi algorithm} \label{TGA}
	\begin{enumerate}
		\item 	{\bf Input.} $\mathscr{A}\in \mathbb{R}^{n\times n \times p}$, $\mathscr{V}\in \mathbb{R}^{n\times s \times p}$ and the positive integer $m$.
		\item Set $\beta=\|\mathscr{V}\|_F$, $\mathscr{V}_{1} =   \dfrac{\mathscr{V}}{  \beta}$
		\item For $j=1,\ldots,m$
		\begin{enumerate}
			\item $\mathscr{W}=  \mathcal{M} (    \mathscr{V}_j)$
			\item for $i=1,\ldots,j$
			\begin{enumerate}
				\item $h_{i,j}=\langle \mathscr{V}_i, \mathscr{W} \rangle$
				\item $\mathscr{W}=\mathscr{W}-h_{i,j}\;\mathscr{V}_i$	
			\end{enumerate}	
			\item End for
			\item   $h_{j+1,j}=\Vert \mathscr {W} \Vert_F$. If $h_{j+1,j}=0$, stop; else
			\item $\mathcal {V}_{j+1}=\mathscr {W}/h_{j+1,j}$.

		\end{enumerate}
		\item End and return $\mathbb{V}_m$
	\end{enumerate}
\end{algorithm}

\medskip
It is not difficult to show that after m steps of Algorithm \ref{TGA},  the tensors  $\mathscr{V}_{1},\ldots,\mathscr{V}_{m}$, form an orthonormal basis of the tensor global Krylov  subspace $\mathscr{TK} _{m}(\mathcal{M},\mathscr{V})$.
\noindent Let $\mathbb{V}_{m}  $ be  the $(n\times (sm)\times p)$ tensor with frontal slices $\mathscr{V}_{1},\ldots,\mathscr{V}_{m}$ and let $ {\widetilde{H}}_{m}$ be  the $(m+1)\times m  $ upper  Hesenberg matrix whose elements are the $h_{i,j}$'s defined by Algorithm \ref{TGA}.    Let  $ {H}_{m}$ be the matrix obtained from $\widetilde{ { H}}_{m}$ by deleting its last row;  $H_{.,j}$ will denote the $j$-th column of the matrix  $H_m$ and $\mathscr{A}\star\mathbb{V}_{m}  $ is  the $(n\times (sm)\times p)$ tensor with frontal slices  $\mathscr{A}\star\mathscr{V}_{1},\ldots,\mathscr{A}\star\mathscr{V}_{m}$: 
\begin{equation}
\label{ev1}
\mathbb{V}_{m}:=\left[  \mathscr{V}_{1},\ldots,\mathscr{V}_{m}\right] \;\;\; {\rm and}\;\;\; \mathscr{W} _{m}:=[\mathcal{M} ( \mathscr{V}_1),\ldots,\mathcal{M} ( \mathscr{V}_m)].
\end{equation}
We introduce the product  $\circledast$ defined by $$\mathbb{V}_{m}\circledast y=\sum_{j=1}^{m} {y}_{j}\mathscr{  {V}}_{j},\; \; y= (y_1,\ldots,y_m)^T\in \mathbb{R}^m,$$
and we set
\begin{equation*}
\mathbb{V}_{m}\circledast {{ {H}}_{m}}=\left[   \mathbb{V}_m\circledast H_{.,1} ,\ldots,\mathscr{V}_{m}\circledast H_{.,m} \right].
\end{equation*}
Then, it is easy to see that for all  vectors  $u$ and $ v$ in  $\mathbb{R}^{m}$, we have 
\begin{equation}\label{relationproduit}
\mathbb{V}_{m}\circledast (u+v)=\mathbb{V}_{m}\circledast u + \mathbb{V}_{m}\circledast v \quad \text{and}\quad (\mathbb{V}_{m}\circledast H_m)\circledast u=\mathbb{V}_{m}\circledast(H_m\;u).
\end{equation}
With these notations, we can show the following result  that will be useful later on.
\medskip

\begin{proposition}\label{normfrobnorm2}
	Let $\mathbb{V}_{m}$ be the tensor defined by  $\left[ \mathscr{V}_{1},\ldots,\mathscr{V}_{m}\right]$ where $\mathscr{V}_{i}\in \mathbb{R}^{n\times s\times p} $ are defined by the Tensor   tArnoldi algorithm. Then, we have
	\begin{equation}
	\|\mathbb{V}_{m}\circledast y\|_F=\|y\|_2, \; \forall y= (y_1,\ldots,y_m)^T\in \mathbb{R}^m .
	\end{equation}
\end{proposition}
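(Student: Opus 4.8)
The plan is to reduce the statement to the orthonormality of the basis tensors $\mathscr{V}_1,\ldots,\mathscr{V}_m$ generated by Algorithm \ref{TGA}, together with the bilinearity of the scalar inner product $\langle\cdot,\cdot\rangle$. Since $\|\cdot\|_F=\sqrt{\langle\cdot,\cdot\rangle}$, it suffices to work with the squared norm. First I would substitute the definition $\mathbb{V}_{m}\circledast y=\sum_{j=1}^{m}y_{j}\mathscr{V}_{j}$ into $\|\mathbb{V}_{m}\circledast y\|_F^2=\langle\mathbb{V}_{m}\circledast y,\,\mathbb{V}_{m}\circledast y\rangle$ and expand, obtaining the double sum
\[
\|\mathbb{V}_{m}\circledast y\|_F^2=\Big\langle \sum_{i=1}^{m}y_i\mathscr{V}_i,\ \sum_{j=1}^{m}y_j\mathscr{V}_j\Big\rangle=\sum_{i=1}^{m}\sum_{j=1}^{m}y_i y_j\,\langle \mathscr{V}_i,\mathscr{V}_j\rangle .
\]

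Here the key ingredient is that $\{\mathscr{V}_i\}$ is an orthonormal family, i.e. $\langle \mathscr{V}_i,\mathscr{V}_j\rangle=\delta_{ij}$, which was already noted immediately after Algorithm \ref{TGA}. Using this, every off-diagonal term vanishes and each diagonal term contributes $y_i^2$, so the double sum collapses to $\sum_{i=1}^{m}y_i^2=\|y\|_2^2$. Taking square roots then yields $\|\mathbb{V}_{m}\circledast y\|_F=\|y\|_2$, as claimed.

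The only points requiring care — and they are routine rather than genuine obstacles — are the two structural facts invoked above. The bilinearity needed to pull the coefficients $y_i,y_j$ out of $\langle\cdot,\cdot\rangle$ and to distribute it over the finite sums follows at once from the entrywise definition $\langle \mathscr{A},\mathscr{B}\rangle=\sum_{i_1,i_2,i_3}a_{i_1i_2i_3}b_{i_1i_2i_3}$. The orthonormality itself I would confirm directly from the algorithm: step (2) normalizes $\mathscr{V}_1$ so that $\|\mathscr{V}_1\|_F=1$, while the projection updates in (b)(ii) subtract off the components along the previously computed $\mathscr{V}_i$ and the division in (e) renormalizes, so that inductively $\langle \mathscr{V}_i,\mathscr{V}_j\rangle=\delta_{ij}$. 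No further machinery is needed, and the argument is essentially the tensor analogue of the classical identity $\|V_m y\|_2=\|y\|_2$ for a matrix $V_m$ with orthonormal columns.
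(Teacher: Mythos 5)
Your proof is correct and follows essentially the same route as the paper's: substitute the definition $\mathbb{V}_{m}\circledast y=\sum_{j=1}^{m}y_{j}\mathscr{V}_{j}$, expand the squared Frobenius norm via the inner product, and invoke the orthonormality of the $\mathscr{V}_i$'s to collapse the sum to $\|y\|_2^2$. The only difference is cosmetic: you spell out the bilinear double-sum expansion and the inductive justification of orthonormality, which the paper leaves implicit.
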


\begin{proof}
	From the definition of  the  product  $\circledast$,  we have $\sum_{j=1}^{m} {y}_{j}\mathscr{  {V}}_{j}=\mathbb{V}_{m}\circledast y$. Therefore,  
	$$\|\mathbb{V}_{m}\circledast y\|_F^2= \left< \sum_{j=1}^{m} {y}_{j}\mathscr{  {V}}_{j},\sum_{j=1}^{m} {y}_{j}\mathscr{  {V}}_{j} \right>_F.$$
	But, since the tensors $ \mathscr{  {V}}_{i}$'s are orthonormal, it follows that
	$$\|\mathbb{V}_{m}\circledast y\|_F^2= \sum_{j=1}^{m} {y}_{j}^2=\|y\|_2^2,$$
	which shows the result.
	
\end{proof}

\noindent With the above notations, we can easily prove the  results of the following proposition.

\begin{proposition}\label{T-GlobalArnolproposit}
	Suppose that m steps of Algorithm \ref{TGA} have been   run. Then, the following statements hold:
	\begin{eqnarray} \label{relationtarnoldi}
	\mathscr{W} _{m}&=&\mathbb{V}_{m}\circledast {{ {H}}_{m}} +  h_{m+1,m}\left[  \mathscr{O}_{n\times s\times p},\ldots,\mathscr{O}_{n\times s\times p},\mathscr{V}_{m+1}\right],\\
	\mathscr{W} _{m}&=&\mathbb{V}_{m+1}   \circledast \widetilde{ { H}}_{m}, \\
	\mathbb{V}_{m}^{T}\diamondsuit\mathscr{W} _{m}&=& {H}_{m}, \\	
	\mathbb{V}_{m+1}^{T}\diamondsuit  \mathscr{W} _{m}&=&\widetilde{ { H}}_{m},\\
	\mathbb{V}_{m}^{T} \diamondsuit\mathbb{V}_m&=& {I}_{ m  },
	\end{eqnarray}
	where ${I}_{ m  }$ the identity matrix and $\mathscr{O}$ is the tensor having all its entries equal to zero.
\end{proposition}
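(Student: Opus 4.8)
The plan is to derive all five identities directly from the modified Gram--Schmidt recursion that defines Algorithm \ref{TGA}, the single engine behind everything being the scalar Arnoldi relation obtained by rearranging the inner loop. At the $j$-th outer step the algorithm sets $\mathscr{W}=\mathcal{M}(\mathscr{V}_j)$, subtracts the projections $h_{i,j}\mathscr{V}_i$ for $i=1,\ldots,j$, and finally puts $\mathscr{V}_{j+1}=\mathscr{W}/h_{j+1,j}$ with $h_{j+1,j}=\|\mathscr{W}\|_F$. Solving for $\mathcal{M}(\mathscr{V}_j)$ gives
\begin{equation}\label{scalararnoldi}
\mathcal{M}(\mathscr{V}_j)=\sum_{i=1}^{j}h_{i,j}\,\mathscr{V}_i+h_{j+1,j}\,\mathscr{V}_{j+1},\qquad j=1,\ldots,m,
\end{equation}
which is precisely the $j$-th block of $\mathscr{W}_m$. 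I would establish \eqref{scalararnoldi} first and then read each relation off it.

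For the first identity I would compute the $j$-th block of $\mathbb{V}_m\circledast H_m$, namely $\mathbb{V}_m\circledast H_{.,j}=\sum_{i=1}^{m}(H_m)_{i,j}\mathscr{V}_i$. Because $\widetilde{H}_m$, and hence $H_m$, is upper Hessenberg, column $j$ of $H_m$ is supported on rows $1$ through $\min(j+1,m)$. For $j<m$ this yields $\mathbb{V}_m\circledast H_{.,j}=\sum_{i=1}^{j+1}h_{i,j}\mathscr{V}_i=\mathcal{M}(\mathscr{V}_j)$ by \eqref{scalararnoldi}, matching the $j$-th block of $\mathscr{W}_m$ with no correction. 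For $j=m$ the row $i=m+1$ is absent from $H_m$, so $\mathbb{V}_m\circledast H_{.,m}=\mathcal{M}(\mathscr{V}_m)-h_{m+1,m}\mathscr{V}_{m+1}$, and the missing piece is exactly the extra term $h_{m+1,m}\mathscr{V}_{m+1}$ that the last block of the right-hand side carries; this proves relation one. The second identity is the same computation with the extra tensor appended: since $\widetilde{H}_m$ has $m+1$ rows, its $j$-th column is supported on rows $1,\ldots,j+1$ for every $j\le m$, whence $\mathbb{V}_{m+1}\circledast\widetilde{H}_{.,j}=\sum_{i=1}^{j+1}h_{i,j}\mathscr{V}_i=\mathcal{M}(\mathscr{V}_j)$ for all $j$, giving $\mathscr{W}_m$ on the nose.

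The last three identities follow from the definition of the T-diamond product and the orthonormality of $\mathscr{V}_1,\ldots,\mathscr{V}_{m+1}$. For the fifth I would simply note $(\mathbb{V}_m^{T}\diamondsuit\mathbb{V}_m)_{i,j}=\langle\mathscr{V}_i,\mathscr{V}_j\rangle=\delta_{ij}$, so the matrix is $I_m$. For the third and fourth I would use $(\mathbb{V}_m^{T}\diamondsuit\mathscr{W}_m)_{i,j}=\langle\mathscr{V}_i,\mathcal{M}(\mathscr{V}_j)\rangle$ and substitute \eqref{scalararnoldi}; orthonormality annihilates every term except $k=i$, leaving $\langle\mathscr{V}_i,\mathcal{M}(\mathscr{V}_j)\rangle=h_{i,j}$ (with the convention $h_{i,j}=0$ once $i>j+1$). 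Restricting the row index to $i\le m$ then reproduces $H_m$, giving identity three, while retaining $i\le m+1$ reproduces $\widetilde{H}_m$, giving identity four.

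The computations are routine, and Proposition \ref{normfrobnorm2} together with the orthonormality already supplies every structural fact needed. The only point demanding care, and the place where a careless argument would go wrong, is the bookkeeping of the upper Hessenberg sparsity pattern $h_{i,j}=0$ for $i>j+1$: one must track that the row $i=m+1$ is present in $\widetilde{H}_m$ but deleted in $H_m$, which is exactly what distinguishes relations one and two and fixes the location of the residual term $h_{m+1,m}\mathscr{V}_{m+1}$. Beyond this indexing care I anticipate no genuine obstacle, provided the $\circledast$ and $\diamondsuit$ products are consistently interpreted block-by-block in accordance with their definitions.
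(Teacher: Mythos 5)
Your proof is correct and follows essentially the same route as the paper's: derive the recurrence $\mathcal{M}(\mathscr{V}_j)=\sum_{i=1}^{j+1}h_{i,j}\,\mathscr{V}_i$ from the algorithm, identify the $j$-th block of $\mathscr{W}_m$ with it, read the first two relations off the column-wise definition of $\circledast$ together with the Hessenberg sparsity, and get the last three from the definition of $\diamondsuit$ plus orthonormality of the $\mathscr{V}_i$'s. You are in fact somewhat more explicit than the paper, which works out only the $\mathbb{V}_{m+1}\circledast\widetilde{H}_m$ computation and states that the remaining relations follow from the T-diamond definition.
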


\medskip
\begin{proof}
	From Algorithm \ref{TGA}, we have $ \mathcal{M}( \mathscr{V}_{j})=\displaystyle \sum_{i =1}^{j+1}h_{i,j}  \mathscr{V}_{i}$.
	Using the fact that  $$\mathscr{W} _{m}:=[\mathcal{M} ( \mathscr{V}_1),\ldots,\mathcal{M} ( \mathscr{V}_m)],$$  
	the $j$-th frontal slice    of $\mathscr{W} _{m}$ is given by 
	\begin{align*}
	(\mathscr{W}_{m})_j= \mathcal{M} ( \mathscr{V}_j)&=\sum_{i =1}^{j+1}h_{i,j} \mathscr{V}_{i}.	 
	\end{align*}
	Furthermore, from the definition of the $\circledast$ product, we have 
	\begin{align*}
	(\mathbb{V}_{m+1}   \circledast \widetilde{ { H}}_{m})_j&=\mathbb{V}_{m+1}\circledast H_{.,j}\\
	&= \sum_{i =1}^{j+1}h_{i,j} \mathscr{V}_{i},
	\end{align*}
	which proves the first two relations. The other relations follow from the definition of T-diamond product
\end{proof} 
\subsection{The   tFOM method }
In the following, we  examined the tensor  full orthogonalization  (tFOM) method . It could be considered as generalization of the  global FOM algorithm \cite{jbilou1}. 
Let $\mathscr{  {X}}_{0}\in \mathbb{R}^{n\times s\times p}$ be an arbitrary initial guess with   the corresponding  residual
$\mathscr{R}_0=\mathscr{C}-\mathcal{M} ( \mathscr{X}_0)$.    The aim of tensor  T-global GMRES method is to find and approximate solution  $\mathscr{X}_{m}$ approximating the exact solution $\mathscr{X}^*$ of \eqref{syslintenst}  such that 
\begin{equation}
\label{fom1}
\mathscr{X}_{m}-\mathscr{X}_{0}= \mathscr{P}_{m}\in \mathscr{TK}_{m}(\mathcal{M},\mathscr{R}_0), 
\end{equation}   and 
\begin{equation}
\label{fom2} 
 \mathscr{R}_{m}  \perp \mathscr{TK} _{m}(\mathcal{M},\mathscr{R}_0) 
\end{equation} 

$\mathscr{P}_{m}\in \mathscr{TK}_{m}(\mathcal{M},\mathscr{R}_0)$ can be expressed  as   $\mathscr{P}_{m}=\mathscr{V}_{m}\circledast y$ with $ y= (y_1,\ldots,y_m)^T\in \mathbb{R}^m$. From where the residual $\mathscr{R}_{m}$  is given by
\begin{equation}
\label{resfom2} 
\mathscr{R}_{m}  = \mathscr{R}_{0}-   \mathscr{W}_{m}\circledast y
\end{equation} 
  Using (\ref{resfom2}),  the  relation (\ref{fom2}) can be expressed as 
  \begin{equation}
  \label{resfomortho} 
  \left\langle \mathscr{V}_{i},\mathscr{R}_{m} \right\rangle =\left\langle  \mathscr{V}_{i},   \mathscr{W}_{m}\circledast y \right\rangle , \;\;  i=1,\ldots,m
  \end{equation} 
   where  the tensors  $\mathscr{V}_{1},\ldots,\mathscr{V}_{m}$, form an orthonormal basis of the tensor global Krylov  subspace $\mathscr{TK} _{m}(\mathcal{M},\mathscr{R}_0)$ .
   Using (\ref{relationproduit}) and  (\ref{relationtarnoldi}), equation (\ref{resfomortho}) can be expressed as:
   \begin{equation}
   \label{fomrelation} 
     {H}_m\;y=\|\mathscr{R}_{0}\|_F\; e_1^{(m)},
   \end{equation} 
   where $e_1^{(m)}$ is the first canonical basis vector in $\mathbb{R}^m$ and  ${H}_m$ the  Hessenberg matrix of  size $(m\times m)$ obtained from \Cref{TGA}.  
   
   	\begin{proposition}
   	At step $m$, the  norm of the residual $\mathscr{R}_{m}=\mathscr{C}- \mathcal{M} (  \mathscr{X}_{m})=\mathscr{R}_{0}-   \mathscr{W}_{m}\circledast y_m$ produced by the tFOM method for tensor equation (\ref{eq1}) has the following expression	 
   	\begin{equation}\label{resnrmfom}
   	\left\|\mathscr{R}_{m}\right\|_{F}=h_{m+1,m}\left|y_m^{(m)}\right|.
   	\end{equation}
   	 where $y_m^{(m)}$ is the last component of the vector $y_m$ 
   \end{proposition}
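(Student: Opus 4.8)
The plan is to express the residual entirely in terms of the basis tensor $\mathbb{V}_{m+1}$ and then invoke the orthonormality established in Proposition~\ref{normfrobnorm2}. First I would start from the residual formula $\mathscr{R}_m = \mathscr{R}_0 - \mathscr{W}_m \circledast y_m$ and rewrite both terms on the right using the tArnoldi factorization. For the second term, I would apply the relation $\mathscr{W}_m = \mathbb{V}_{m+1} \circledast \widetilde{H}_m$ from Proposition~\ref{T-GlobalArnolproposit} together with the associativity-type identity $(\mathbb{V}_{m+1} \circledast \widetilde{H}_m) \circledast y_m = \mathbb{V}_{m+1} \circledast (\widetilde{H}_m\, y_m)$ coming from \eqref{relationproduit}, so that $\mathscr{W}_m \circledast y_m = \mathbb{V}_{m+1} \circledast (\widetilde{H}_m\, y_m)$. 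For the first term, since $\mathscr{R}_0 = \beta\,\mathscr{V}_1$ with $\beta = \|\mathscr{R}_0\|_F$, I would write $\mathscr{R}_0 = \mathbb{V}_{m+1} \circledast (\beta\, e_1^{(m+1)})$, where $e_1^{(m+1)}$ is the first canonical vector in $\mathbb{R}^{m+1}$.

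Using linearity of $\circledast$ in its right argument (the first identity in \eqref{relationproduit}), these combine into $\mathscr{R}_m = \mathbb{V}_{m+1} \circledast \big(\beta\, e_1^{(m+1)} - \widetilde{H}_m\, y_m\big)$. The crucial step is then a purely matrix-level computation: since $\widetilde{H}_m$ is obtained from the square Hessenberg matrix $H_m$ by appending the single row $(0,\ldots,0,h_{m+1,m})$, the product $\widetilde{H}_m\, y_m$ has $H_m\, y_m$ as its first $m$ components and $h_{m+1,m}\, y_m^{(m)}$ as its last. The Galerkin condition \eqref{fomrelation}, namely $H_m\, y_m = \beta\, e_1^{(m)}$, annihilates the first $m$ components of $\beta\, e_1^{(m+1)} - \widetilde{H}_m\, y_m$, leaving only $-h_{m+1,m}\, y_m^{(m)}$ in the last slot. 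Hence $\mathscr{R}_m = -\,h_{m+1,m}\, y_m^{(m)}\, \mathscr{V}_{m+1}$.

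Finally I would take Frobenius norms. Applying Proposition~\ref{normfrobnorm2} to the basis $\mathbb{V}_{m+1}$ (equivalently, using that $\mathscr{V}_{m+1}$ has unit Frobenius norm) gives $\|\mathscr{R}_m\|_F = \big|h_{m+1,m}\, y_m^{(m)}\big| = h_{m+1,m}\, \big|y_m^{(m)}\big|$, which is the claimed identity.

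I expect the only genuine bookkeeping hazard to be the index shift between the square matrix $H_m$ (size $m \times m$) and the rectangular $\widetilde{H}_m$ (size $(m+1) \times m$), and correspondingly between the basis tensors $\mathbb{V}_m$ and $\mathbb{V}_{m+1}$: one must embed $\beta\, e_1^{(m)}$ as the top block of $\beta\, e_1^{(m+1)}$ and verify that the appended row of $\widetilde{H}_m$ contributes exactly the term $h_{m+1,m}\, y_m^{(m)}$. Everything else is an immediate consequence of the linearity relations in \eqref{relationproduit} and the orthonormality from Proposition~\ref{normfrobnorm2}, so no substantive analytic difficulty arises.
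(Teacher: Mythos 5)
Your proof is correct and follows essentially the same route as the paper's: both substitute the tArnoldi relations of Proposition~\ref{T-GlobalArnolproposit} into $\mathscr{R}_m=\mathscr{R}_0-\mathscr{W}_m\circledast y_m$, use the Galerkin condition \eqref{fomrelation} to annihilate the first $m$ components, and conclude via the orthonormality argument of Proposition~\ref{normfrobnorm2}. The only cosmetic difference is that you start from the compact form $\mathscr{W}_m=\mathbb{V}_{m+1}\circledast\widetilde{H}_m$ whereas the paper uses the equivalent split form $\mathscr{W}_m=\mathbb{V}_m\circledast H_m+h_{m+1,m}\left[\mathscr{O},\ldots,\mathscr{O},\mathscr{V}_{m+1}\right]$; your bookkeeping is in fact the cleaner of the two (the paper's displayed identity carries a sign slip that is immaterial once norms are taken).
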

   \begin{proof}
   	At step $m$, using the relations (\ref{resfom2}) and      (\ref{fom2}) the norm of the  residual $\mathscr{R}_m$ can be expressed as
   	$$\left\|\mathscr{R}_{m}\right\|_{F}=\left\|\mathscr{R}_{0}-\mathbb{V}_{m}\circledast { ({H}_{m}\,y_m)} +  h_{m+1,m}\left[  \mathscr{O}_{n\times s\times p},\ldots,\mathscr{O}_{n\times s\times p},\mathscr{V}_{m+1}\right]\circledast y_m\right\|_{F} .$$
   	since ${H}_m\;y_m=\|\mathscr{R}_{0}\|_F\; e_1^{(m)}$ and $\mathscr{R}_{0}\|_F (\mathbb{V}_{m}\circledast e_1^{(m)})$, we get 
   	$$\left\|\mathscr{R}_{m}\right\|_{F}= h_{m+1,m}\left\|\left[  \mathscr{O}_{n\times s\times p},\ldots,\mathscr{O}_{n\times s\times p},\mathscr{V}_{m+1}\right]\circledast y_m\right\|_{F}=h_{m+1,m}\left|y_m^{(m)}\right| $$
   	 	which shows the results. 
   \end{proof}

\subsection{The   tGMRES method }
\medskip
\noindent In the sequel, we develop the tensor    tGMRES algorithm  for solving the problem \eqref{syslintenst}. It could be considered as generalization of the  global GMERS algorithm \cite{jbilou29}. 
Let $\mathscr{  {X}}_{0}\in \mathbb{R}^{n\times s\times p}$ be an arbitrary initial guess with   the corresponding  residual
$\mathscr{R}_0=\mathscr{C}-\mathcal{M} ( \mathscr{X}_0)$.    The purpose of tensor    tGMRES method is to find and approximate solution  $\mathscr{X}_{m}$ approximating the exact solution $\mathscr{X}^*$ of \eqref{syslintenst}  such that 
\begin{equation}
\label{gmres1}
\mathscr{X}_{m}-\mathscr{X}_{0} \in \mathscr{TK}_{m}(\mathcal{M},\mathscr{R}_0), 
\end{equation} 
with the classical  minimization property 
\begin{equation}
\label{gmres2} 
\Vert \mathscr{R}_{m}\Vert_F = \displaystyle \min_{ \mathscr{X} \in \mathscr{X}_{0} + \mathscr{TK} _{m}(\mathcal{M},\mathscr{R}_0)}  \|\mathscr{C}-\mathcal{M}(  \mathscr{X})\|_F.
\end{equation} 

\noindent Let   $\mathscr{X}_{m}=\mathscr{X}_{0}+\mathbb{V}_{m}\circledast y  $ with $ {y} \in \mathbb{R}^m $,   be the approximate solution satisfying \eqref{gmres1}. Then, 
\begin{align*}
\mathscr{R}_m=&\mathscr{C}-\mathcal{M} ( \mathscr{X}_m)\\
=& \mathscr{C}-\mathcal{M}  \left(\mathscr{X}_{0}+ \mathbb{V}_{m}\circledast y\right) \\
 =&\mathscr{R}_{0}-  \mathbb{W}_{m} \circledast y. 
\end{align*}
It follows then that 
\begin{align*}
\|\mathscr{R}_m \|_{F}&=  \displaystyle \min_{ y\in \mathbb{R}^{m }}
\|\mathscr{R}_{0}-  \mathbb{W}_{m} \circledast y\|_F,
\end{align*}
where $\mathscr{W} _{m}:=[\mathcal{M} ( \mathscr{V}_1),\ldots,\mathcal{M} ( \mathscr{V}_m)]$ is the $(n\times sm\times p)$ tensor defined earlier.  Using  Proposition  \ref{normfrobnorm2} and the fact that      $\mathscr{R}_{0}=\|\mathscr{R}_{0}\|_F \mathscr{V}_1 $  with  $\mathscr{V}_1 = \mathscr{V}_{m+1}\circledast e_{1}$, where $e_{1}$  the first  canonical basis vector in $\mathbb{R}^{m+1}$,  we get 
\begin{align*}
\|\mathscr{R}_{0}-(\mathscr{A}\star\mathbb{V}_{m})\circledast y\|_F&=\| \mathscr{R}_{0}- (\mathbb{V}_{m+1}   \circledast \widetilde{ { H}}_{m}) \circledast y  \|_F\\
&=\|\|\mathscr{R}_{0}\|_F (\mathbb{V}_{m+1}\circledast e_1)- (\mathbb{V}_{m+1}   \circledast \widetilde{ { H}}_{m}) \circledast y \|_F\\
&=\| \mathbb{V}_{m+1}\circledast (||\mathscr{R}_{0}\|_F e_1-\widetilde{ { H}}_{m}  y) \|_F \\
&=\|\; \|\mathscr{R}_{0}\|_F\; e_1-\widetilde{ { H}}_{m}   y \|_2.
\end{align*}
Finally, we obtain  
\begin{equation}\label{solutdegmresxm}
\mathscr{X}_{m}=\mathscr{X}_{0}+ \mathbb{V}_{m} \circledast y, 
\end{equation}
where,
\begin{equation}\label{Gmressol}
y=  \text{arg } \min_{ {  {y}}\in \mathbb{R}^{m }}||\; ||\mathscr{R}_{0}||_F\; e_1-\widetilde{ { H}}_{m}  y) ||_2.
\end{equation}

	\begin{proposition}
	At step $m$, the residual $\mathscr{R}_{m}=\mathscr{C}-\mathcal{M} ( \mathscr{X}_m)$ produced by the tensor Global GMRES method for tensor equation (\ref{eq1}) has the following expression
	\begin{equation}\label{resex}
	\mathscr{R}_m=\mathbb{V}_{m+1} \circledast\left(\gamma_{m+1}Q_me_{m+1}\right),
	\end{equation}
	
	where $Q_m$ is the unitary matrix obtained from the QR decomposition of the upper Hessenberg matrix $\widetilde{H}_{m}$ and $\gamma_{m+1}$ is the last component of the vector $\left\|\mathscr{R}_{0}\right\|_{F} Q_{m}^{\mathrm{T}} e_{1}$ and $e_{m+1}=(0,0, \ldots, 1)^{\mathrm{T}} \in \mathbb{R}^{m+1}.\\$
	Furthermore,
	\begin{equation}\label{resnrm}
	\left\|\mathscr{R}_{m}\right\|_{F}=\left|\gamma_{m+1}\right|.
	\end{equation}
	
\end{proposition}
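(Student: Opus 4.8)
The plan is to take the scalar least-squares reformulation already obtained just before the statement as the starting point and then run the classical QR argument for GMRES at the level of the $(m+1)\times m$ Hessenberg matrix $\widetilde{H}_m$. From the derivation preceding \eqref{Gmressol} we have
$$\|\mathscr{R}_m\|_F = \min_{y\in\mathbb{R}^m}\|\,\|\mathscr{R}_0\|_F\,e_1 - \widetilde{H}_m\,y\|_2,$$
so the whole minimization has been transferred from the $\circledast$-product tensor setting to an ordinary vector least-squares problem. First I would introduce the QR decomposition $\widetilde{H}_m = Q_m R_m$ with $Q_m\in\mathbb{R}^{(m+1)\times(m+1)}$ orthogonal and $R_m$ upper triangular with vanishing last row, and set $g = \|\mathscr{R}_0\|_F\,Q_m^T e_1$, writing $\gamma_{m+1}$ for its last component.

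Next, using that the Euclidean norm is invariant under $Q_m^T$, I would rewrite the objective as $\|g - R_m y\|_2$. Since the last row of $R_m$ is zero, the last coordinate of $g - R_m y$ equals $\gamma_{m+1}$ for every $y$, while its leading $m$ coordinates are annihilated by the (nonsingular, under the no-breakdown assumption) triangular system, giving the optimal $y_m$. This at once yields the minimal value $|\gamma_{m+1}|$ and the identity $g - R_m y_m = \gamma_{m+1}\,e_{m+1}$.

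To reach the tensor form \eqref{resex}, I would start from $\mathscr{R}_m = \mathscr{R}_0 - \mathscr{W}_m\circledast y_m$ and invoke $\mathscr{W}_m = \mathbb{V}_{m+1}\circledast\widetilde{H}_m$ from Proposition \ref{T-GlobalArnolproposit}, the fact $\mathscr{R}_0 = \|\mathscr{R}_0\|_F(\mathbb{V}_{m+1}\circledast e_1)$, and the linearity rules \eqref{relationproduit}, collapsing everything into $\mathscr{R}_m = \mathbb{V}_{m+1}\circledast(\|\mathscr{R}_0\|_F\,e_1 - \widetilde{H}_m y_m)$. Substituting $\widetilde{H}_m = Q_m R_m$ and the relation $\|\mathscr{R}_0\|_F\,e_1 - \widetilde{H}_m y_m = Q_m(g - R_m y_m) = \gamma_{m+1}\,Q_m e_{m+1}$ then gives \eqref{resex}. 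The norm identity \eqref{resnrm} follows by applying Proposition \ref{normfrobnorm2} to \eqref{resex} together with $\|Q_m e_{m+1}\|_2 = 1$.

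The only delicate point I anticipate is the bookkeeping that legitimately transfers the tensor minimization down to the vector least-squares problem and back; this rests entirely on the isometry $\|\mathbb{V}_{m+1}\circledast y\|_F = \|y\|_2$ of Proposition \ref{normfrobnorm2} and on the bilinearity identities \eqref{relationproduit}. Once these are in hand, the remainder is the verbatim matrix-GMRES computation and presents no real difficulty.
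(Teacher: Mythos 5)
Your proposal is correct and takes essentially the same route as the paper's own proof: both write the residual as $\mathscr{R}_m=\mathbb{V}_{m+1}\circledast\bigl(\|\mathscr{R}_0\|_F\,e_1-\widetilde{H}_m y_m\bigr)$, introduce the QR factorization of $\widetilde{H}_m$, use optimality of $y_m$ to collapse the bracket to $\gamma_{m+1}Q_m e_{m+1}$, and invoke the isometry of Proposition \ref{normfrobnorm2} to get $\|\mathscr{R}_m\|_F=|\gamma_{m+1}|$. The only difference is cosmetic (you carry out the QR manipulation entirely at the vector level before applying $\mathbb{V}_{m+1}\circledast$, while the paper moves $Q_m$ inside the $\circledast$-product first), and your explicit justification that the zero last row of $R_m$ and the nonsingular leading triangular block force $g-R_m y_m=\gamma_{m+1}e_{m+1}$ is a point the paper leaves implicit.
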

\begin{proof}
	At step $m$, the residual $\mathscr{R}_m$ can be expressed as
	$$\mathscr{R}_m=\mathbb{V}_{m+1} \circledast\left(\beta e_{1}-\widetilde{H}_{m} y_{m}\right),$$
	by considering the QR decomposition $\widetilde{H}_{m}=Q_{m}\widetilde{U}_m$ of the $(m + 1) \times m$ matrix $\widetilde{H}_{m}$, we get
	$$\mathscr{R}_m=\left(\mathbb{V}_{m+1} \circledast Q_m\right)\circledast\left(\beta Q_m^T e_{1}-\widetilde{U}_{m} y_{m}\right).$$
	Since $y$ solves problem (\ref{Gmressol}), it follows that
	$$\mathscr{R}_m=\mathbb{V}_{m+1} \circledast\left(\gamma_{m+1}Q_me_{m+1}\right),$$
	where $\gamma_{m+1}$ is the last component of the vector $\beta Q_{m}^T e_{1}.$ Therefore,
	\begin{eqnarray*}
		\left\|\mathscr{R}_{m}\right\|_{F}&=&\left\|\mathbb{V}_{m+1} \circledast\left(\gamma_{m+1}Q_me_{m+1}\right)\right\|_F\\
		&=&\left\|\gamma_{m+1}Q_me_{m+1}\right\|_2\\
		&=&\left|\gamma_{m+1}\right|,
	\end{eqnarray*}
	which shows the results.  
\end{proof}

 \begin{algorithm}[h!]
	\caption{ tFOM and tGMRES Algorithms }  \label{GlobaFOMGMRES}
	\begin{enumerate}
		\item 	{\bf Input.} $\mathscr{A} \in \mathbb{R}^{n\times n \times n_3}$   , $\mathscr{B} \in \mathbb{R}^{s\times s \times n_3}$, $\mathscr{X}_{0},\mathscr{C}\in \mathbb{R}^{n\times s \times n_3}$, the maximum number of iteration  $\text{Iter}_{\text{max}} $  an integer $m$ and a tolerance  $tol$.
			\item 	{\bf Output.} $\mathscr{X}_m\in \mathbb{R}^{n\times s \times n_3}$ the approximate solution of  (\ref{syslintenst}).
		\item  Compute $\mathscr{R}_0=\mathscr{C}-\mathcal{M}( \mathscr{X}_0) $.
		\item For $k=1,\ldots,\text{Iter}_{\text{max}}$
		\begin{enumerate}
			\item  Apply Algorithm \ref{TGA} to compute  $\mathbb{V}_{m}$ and  $ \widetilde{ { H}}_{m}$.
			
			\item Solve the problem  : $\;\;$
			$\begin{cases}
				  {H}_m\;y_m=\|\mathscr{R}_{0}\|_F\; e_1^{(m)}\;\; ({\rm \textbf{tFOM method}})\\
				  y_m=  \text{arg } \min_{ {  {y}}\in \mathbb{R}^{m }}||\; ||\mathscr{R}_{0}||_F\; e_1^{(m+1)}-\widetilde{ { H}}_{m}  y_m ||_2. \;{\rm(\textbf{ tGMRES method  } )}
				 	\end{cases}$
			 
			\item Compute  $\mathscr{X}_{m}=\mathscr{X}_{0}+ \mathbb{V}_{m} \circledast y_m  $
		\end{enumerate}
		\item If $||\mathscr{R}_{m}||_F<tol$ \textbf{then}
		
		$\;\;\;\;\;\;$\textbf{return} $\mathscr{X}_{m}$;
		            
		\item else   $\mathscr{X}_{0}=\mathscr{X}_{m}$ and go to Step 2.
		\item 	{\bf Output.} $\mathscr{X}_m\in \mathbb{R}^{n\times s \times n_3}$ the approximate solution of  (\ref{syslintenst}).   
		
	\end{enumerate}
\end{algorithm}

\section { t-Bartels-Stewart method for Sylvester tensor equations of small size}
In this section, we introduce the  \textit{t-Bartels-Stewart} method for Sylvester tensor equations of small size  based on T-product formalism , as  a  generalisation of  the well known  Bartels and Stewart algorithm proposed in \cite{Bartels}. Motivated by
the matrix case, the  \textit{t-Bartels-Stewart} method is based on transforming the tensors $\mathscr{A}$ and $\mathscr{B}$ into \textit{t-real Schur} form that will be defined later-. This gives an new triangular tensor  equation that will be solved by  \textit{t-back-substitution} method.  

\noindent First, we introduce the \textit{t-real schur} decomposition.
  \begin{theorem}
	 Let  $\mathscr{A}\in \mathbb{R}^{n \times n \times n_{3}}$. Then  $\mathscr{A}$ can be factored as
	 \begin{equation}\label{Schur}
	  \mathscr{A}=\mathscr{U}\star\mathscr{R}\star\mathscr{U}^T,
	 \end{equation}
	 where $\mathscr{R}\in \mathbb{R}^{n \times n \times n_{3}}$   quasi upper triangular tensor  (each frontal slice of $\mathscr{R}$ is   quasi upper triangular) and $\mathscr{U}\in \mathbb{R}^{n \times n \times n_{3}}$ orthogonal tensor.
\end{theorem}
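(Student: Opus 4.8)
The plan is to pass to the Fourier domain, where by \eqref{dft9} and \eqref{dft8} the T-product becomes an ordinary block-diagonal matrix product, carry out a slice-wise Schur factorization there, and then transport the result back to the original domain while taking care to preserve both realness and the triangular structure. First I would apply the DFT along the third mode to obtain $\widetilde{\mathscr{A}}$ with frontal slices $\widetilde{\mathscr{A}}^{(1)},\ldots,\widetilde{\mathscr{A}}^{(n_3)}$, and recall that $\mathscr{C}=\mathscr{A}\star\mathscr{B}$ is equivalent to $\mathbf{C}=\mathbf{A}\mathbf{B}$ with $\mathbf{A}=\mathrm{Diag}(\widetilde{\mathscr{A}})$. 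Because $\mathscr{A}$ is real, its slices obey the conjugate symmetry $\widetilde{\mathscr{A}}^{(i)}=\overline{\widetilde{\mathscr{A}}^{(n_3-i+2)}}$ already exploited in Algorithm \ref{algo1}; in particular $\widetilde{\mathscr{A}}^{(1)}$, together with the middle slice $\widetilde{\mathscr{A}}^{(n_3/2+1)}$ when $n_3$ is even, is a real matrix.

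Next I would build the factorization one slice at a time, respecting this symmetry so that the outcome stays real. At the self-conjugate (real) indices I would use the \emph{real} Schur decomposition $\widetilde{\mathscr{A}}^{(i)}=U_iR_iU_i^{T}$, with $U_i$ real orthogonal and $R_i$ real quasi upper triangular. For every remaining index $i\le\lfloor (n_3+1)/2\rfloor$ I would use a complex Schur decomposition $\widetilde{\mathscr{A}}^{(i)}=U_iR_iU_i^{*}$, with $U_i$ unitary and $R_i$ (strictly) upper triangular, and then \emph{define} the conjugate partner by $U_{n_3-i+2}=\overline{U_i}$ and $R_{n_3-i+2}=\overline{R_i}$. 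A direct check gives $\widetilde{\mathscr{A}}^{(n_3-i+2)}=\overline{\widetilde{\mathscr{A}}^{(i)}}=U_{n_3-i+2}R_{n_3-i+2}U_{n_3-i+2}^{*}$, so every slice is factored and the two families $\{U_i\}$ and $\{R_i\}$ satisfy exactly the same conjugate symmetry as the slices of $\widetilde{\mathscr{A}}$.

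Finally I would assemble the block-diagonal matrices $\mathbf{U}=\mathrm{Diag}(U_1,\ldots,U_{n_3})$ and $\mathbf{R}=\mathrm{Diag}(R_1,\ldots,R_{n_3})$ and let $\mathscr{U},\mathscr{R}$ be their inverse DFTs, which are real tensors precisely because of the conjugate symmetry. Using the equivalence between T-products and block-diagonal products, the identities $\widetilde{\mathscr{A}}^{(i)}=U_iR_iU_i^{*}$ for all $i$ read $\mathbf{A}=\mathbf{U}\mathbf{R}\mathbf{U}^{*}$, which transports back to $\mathscr{A}=\mathscr{U}\star\mathscr{R}\star\mathscr{U}^{T}$, while $U_i^{*}U_i=I$ for all $i$ gives $\mathscr{U}^{T}\star\mathscr{U}=\mathscr{I}$, so $\mathscr{U}$ is orthogonal. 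For the structure of $\mathscr{R}$, each original-domain frontal slice $\mathscr{R}^{(k)}$ is a fixed linear combination (with the inverse-DFT coefficients) of the Fourier-domain slices $R_i$, and a linear combination of (quasi) upper triangular matrices remains (quasi) upper triangular, so every $\mathscr{R}^{(k)}$ is quasi upper triangular. The one point that I expect to require genuine care is this last reconciliation of realness with triangularity: insisting that $\mathscr{U}$ and $\mathscr{R}$ be real forces a merely quasi-triangular real Schur form exactly at the self-conjugate indices, and one must verify that mixing these quasi-upper-triangular slices with the strictly upper triangular ones at the paired indices still yields a quasi upper triangular matrix after the inverse DFT.
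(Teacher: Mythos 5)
Your route is the same as the paper's: block-diagonalize in the Fourier domain via \eqref{dft8}, take a Schur factorization slice by slice, and transport back. In fact you are more careful than the paper itself, whose proof simply says ``compute the Schur decomposition of each $\widetilde{\mathscr{A}}^{(i)}$'' and folds up, never addressing why the resulting tensors $\mathscr{U},\mathscr{R}$ are real; your conjugate-pairing construction ($U_{n_3-i+2}=\overline{U_i}$, $R_{n_3-i+2}=\overline{R_i}$, real Schur at the self-conjugate indices) is exactly the missing justification, and your transport of $U_i^{*}U_i=I$ into $\mathscr{U}^{T}\star\mathscr{U}=\mathscr{I}$ is correct.

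There is, however, one genuine flaw, precisely at the point you flagged: the assertion that ``a linear combination of (quasi) upper triangular matrices remains (quasi) upper triangular'' is false in general. A quasi upper triangular matrix may have nonzero subdiagonal entries, and two such matrices can have their $2\times 2$ bumps in \emph{adjacent} positions, in which case the sum has two consecutive nonzero subdiagonal entries and admits no partition into $1\times 1$ and $2\times 2$ diagonal blocks. Concretely, with $n_3=2$ and $n=3$, both Fourier slices $\widetilde{\mathscr{A}}^{(1)},\widetilde{\mathscr{A}}^{(2)}$ are real and self-conjugate; if the real Schur form $R_1$ has its bump at position $(2,1)$ and $R_2$ at position $(3,2)$, then $\mathscr{R}^{(1)}=\tfrac12(R_1+R_2)$ is full lower Hessenberg, not quasi upper triangular. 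So your argument is complete only for odd $n_3$ (a single quasi-triangular slice $R_1$, whose non-adjacent bump positions contain the subdiagonal support of every $\mathscr{R}^{(k)}$). The gap is fixable: at the (at most two) self-conjugate indices, reorder the eigenvalues in the real Schur forms so that all $2\times 2$ blocks are packed consecutively at the top, i.e.\ the bumps occupy positions $(2,1),(4,3),(6,5),\dots$ in \emph{both} slices; then the union of subdiagonal supports is still a set of non-adjacent positions, and every frontal slice $\mathscr{R}^{(k)}$ is quasi upper triangular as claimed in \eqref{Schur}. With that one modification your proof is correct, and it is strictly more rigorous than the one in the paper.
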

\begin{proof}
	 We  have $$(F_{n_3} \otimes I_{n_1})\, {\rm bcirc}(\mathscr {A})\, 	(F_{n_3}^{*} \otimes I_{n_1})={\bf A} = \left(
	 \begin{array}{cccc}
	 	\widetilde {\mathscr {A}}^{(1)}& &&\\
	 	& \widetilde {\mathscr {A}}^{(2)}&&\\
	 	&&\ddots&\\
	 	&&&\widetilde {\mathscr {A}}^{(n_3)}\\
	 \end{array}
	 \right),$$
	 Next, we compute the schur matrix decomposition of each 
	 frontal slice $	\widetilde {\mathscr {A}}^{(i)}, i=1,\ldots,n_3$. Then 
	\begin{equation}\label{decompshur}
	  \left(
	 \begin{array}{cccc}
	 	\widetilde {\mathscr {A}}^{(1)}& &&\\
	 	&&\ddots&\\
	 	&&&\widetilde {\mathscr {A}}^{(n_3)}\\
	 \end{array}
	 \right)= \left(
	 \begin{array}{cccc}
	 \widetilde {\mathscr {U}}^{(1)}& &&\\
	 &&\ddots&\\
	 &&&\widetilde {\mathscr {U}}^{(n_3)}\\
	 \end{array}
	 \right)\left(
	 \begin{array}{cccc}
	 \widetilde {\mathscr {R}}^{(1)}& &&\\
	 &&\ddots&\\
	 &&&\widetilde {\mathscr {R}}^{(n_3)}\\
	 \end{array}
	 \right)\left(
	 \begin{array}{cccc}
	 \widetilde {\mathscr {U}}^{(1)T}& &&\\
	 &&\ddots&\\
	 &&&\widetilde {\mathscr {U}}^{(n_3)T}\\
	 \end{array}
	 \right) \end{equation}
	 Since $(F_{n_3}^{*} \otimes I_{n_1})
	 \left(
	 \begin{array}{cccc}
	 \widetilde {\mathscr {U}}^{(1)}& &&\\
	 &&\ddots&\\
	 &&&\widetilde {\mathscr {U}}^{(n_3)}
	  \end{array}\right)(F_{n_3}  \otimes I_{n_1})$,$(F_{n_3}^{*} \otimes I_{n_1})
	  \left(
	  \begin{array}{cccc}
	  \widetilde {\mathscr {R}}^{(1)}& &&\\
	  &&\ddots&\\
	  &&&\widetilde {\mathscr {R}}^{(n_3)}
	  \end{array}\right)(F_{n_3}  \otimes I_{n_1})$,$(F_{n_3}^{*} \otimes I_{n_1})
	  \left(
	  \begin{array}{cccc}
	  \widetilde {\mathscr {U}}^{(1)T}& &&\\
	  &&\ddots&\\
	  &&&\widetilde {\mathscr {U}}^{(n_3)T}
	  \end{array}\right)(F_{n_3}  \otimes I_{n_1})$   are block circulant matrices,  by applying
	  the appropriate matrices $(F_{n_3}^{*} \otimes I_{n_1}),(F_{n_3}  \otimes I_{n_1})$ to the left and right   of each matrix in (\ref{decompshur}) respectively, and  folding up the result. This give the result. 
\end{proof}

\medskip
\noindent The \textit{t-real schur} decomposition  can be computed by  the     Algorithm \ref{t-schur}.\\ 
 \begin{algorithm}[!h]
	\caption{t-real schur decomposition}\label{t-schur}
	\begin{enumerate}
		\item 	{\bf Input:} $\mathscr{A}\in {\mathbb R}^{n \times n  \times n_{3}} $  .
		\item \	{\bf Output:} $ \mathscr{R}\in {\mathbb R}^{n\times n \times n_{3}} $ upper triangular tensor.
		 $ \mathscr{U}\in {\mathbb R}^{n\times n \times n_{3}} $ orthogonal tensor.
		\item Set $\widetilde{\mathscr{ {A}}}=\text{{\tt fft}}(\mathscr{ A},[],3) $   
		\begin{enumerate}
			\item for $i=1,\ldots,n_3$
			\begin{enumerate}
				\item $	[ \widetilde{\mathscr{ {Q}}}^{(i)} \widetilde{\mathscr{ {R}}}^{(i)}] =      \text{schur}( \widetilde{\mathscr{ {A}}}^{(i)}) $ \quad (real schur  matrix decomposition)
			\end{enumerate}	
			\item End
		\end{enumerate}
		\item $  \mathscr{Q}  =\text{{\tt ifft}} ( \widetilde{\mathscr{ {Q}}},[],3),   \mathscr{R}  =\text{{\tt ifft}} ( \widetilde{\mathscr{ {R}}},[],3) $
		\item End
	\end{enumerate}
\end{algorithm}
\medskip
\noindent  Next, we transform the tensors $\mathscr{A}$ and $\mathscr{B}$ into \textit{t-real schur} forms $ \mathscr{A}=\mathscr{U_A}\star\mathscr{R_A}\star\mathscr{U_A}^T$
 and $ \mathscr{B}=\mathscr{U_B}\star\mathscr{R_B}\star\mathscr{U_B}^T$. Then the Sylvester tensor equation (\ref{eqsyl1}) become 
 \begin{align*}
   (\mathscr{U_A}\star\mathscr{R_A}\star\mathscr{U_A}^T)\star\mathscr{X}+\mathscr{X}\star(\mathscr{U_B}\star\mathscr{R_B}\star\mathscr{U_B}^T)=\mathscr{C} 
 \end{align*}
 which equivalent to
 \begin{align*}
 \mathscr{U_A}^T\star\left[ (\mathscr{U_A}\star\mathscr{R_A}\star\mathscr{U_A}^T)\star\mathscr{X}+\mathscr{X}\star(\mathscr{U_B}\star\mathscr{R_B}\star\mathscr{U_B}^T)\right] \star\mathscr{U_B} =\mathscr{U_A}^T\star \mathscr{C} \star\mathscr{U_B}
 \end{align*}
Since $\mathscr{U_A}$ and $\mathscr{U_B}$ are orthogonal tensors, we get 
\begin{equation}\label{sylvtriang}
 \mathscr{R_A}\star\mathscr{Y} + \mathscr{Y}\star\mathscr{R_B}  =  \mathscr{C}_1  
\end{equation}
where $ \mathscr{Y}=\mathscr{U_A}^T\star\mathscr{X}\star\mathscr{U_B}$ and $\mathscr{C}_1=\mathscr{U_A}^T\star \mathscr{C} \star\mathscr{U_B}$. 
   		  \begin{remark}
   			 The equation (\ref{sylvtriang}) can be solved by the  \textit{t-back-substitution} method. This 
   			 method follow the same steps of  matrix back substitution, where the tube fibers (3-mode fibers), lateral slices and T-product play the role of  scalars, vectors and matrix product respectively.
   			 \end{remark}      		 
   		\noindent Finally, the \textit{t-Bartels-Stewart} method
   		is   implemented by	Algorithm \ref{t-bartels}. \\
   			\begin{algorithm}[!h]
   			\caption{t-Bartels-Stewart method }\label{t-bartels}
   			\begin{enumerate}
   				\item 	{\bf Input:} $\mathscr{A}\in \mathbb{R}^{n \times n \times n_{3}},\mathscr{B}\in \mathbb{R}^{q \times q \times n_{3}}$ and $ \mathscr{C}\in \mathbb{R}^{n \times q \times n_{3}}$  .
   				\item \	{\bf Output:}   $\mathscr{X}_S\in \mathbb{R}^{n \times q \times n_{3}}$ solution of Sylvester tensor equation (\ref{eqsyl1}) of small size.
   				\item Compute \textit{t-real schur} forms $$ \mathscr{A}=\mathscr{U_A}\star\mathscr{R_A}\star\mathscr{U_A}^T,\quad  \mathscr{B}=\mathscr{U_B}\star\mathscr{R_B}\star\mathscr{U_B}^T$$ using Algorithm \ref{t-schur}.
   				\item  Compute $\mathscr{Y}_S$ solution of tensor equation (\ref{sylvtriang}) using \textit{t-back-substitution} method.
   				\item Compute $\mathscr{X}_S=\mathscr{U_A} \star\mathscr{Y}_S\star\mathscr{U_B}^T$.  
   			 	\end{enumerate}
   		\end{algorithm}
  
   	\section{  Tensor Krylov   methods via T-product for solving large Sylvester tensor equations  }
   	\medskip
   	\noindent In this section, we consider the case when the Sylvester tensor equation  (\ref{eq1}) and
   	(\ref{eq2}) is of large size. As in matrix case, iterative projection methods have been developed; see \cite{elguennouni,Hu,saad}. These methods use Galerkin projection methods, such the classical and the block Arnoldi techniques, to produce low-dimensional Sylvester matrix equations that are solved by using direct methods.\\
   	\noindent In tensor case, the main idea is to transform the large Sylvester tensor equations   using the well-know  Tubal-Block-Arnoldi  that will be introduced later    into low dimensional equations.
   	\subsection{ Tubal Block  Arnoldi method  }
   	\noindent In this subsection, we  introduce  the  Tubal-Block-Arnoldi  method via T-product   as a generalization of the well-know  block Arnoldi method, see \cite{elguennouni} for more details.\\
   	 	Let  $\mathcal {A} \in \mathbb{R}^{n_{1}\times   n_{1} \times n_{3}}$ be a square tensor and   $\mathscr {V} \in \mathbb{R}^{n_{1} \times s \times I_{3} }, s<<n_1$. The  tensor block Krylov subspace is defined by
   	\begin{equation}\label{krylovblock}
   	\mathcal {K}_m^{Block}(\mathscr {A}, \mathscr{V})=Range\left( \left[  \mathscr {V},\mathscr {A}\star \mathscr {V},\ldots,\mathscr{A}^{m-1}\star \mathscr{V}  \right]\right)    \subset \mathbb{R}^{n_{1}\times s\times n_{3} }
   	\end{equation}
   	where $\mathscr {A}^i =\mathscr {A} \star \mathscr {A}^{i-1} , i=1,\ldots,m-1 $, $\mathscr {A}^0=\mathscr {I}_{n_1n_1n_3}$ and    $$   Range(\mathscr {Z})=\left\lbrace \overrightarrow{\mathscr{Y}}\in \mathbb{R}^{n_{1}\times1\times n_{3}} \mid \overrightarrow{\mathscr{Y}} =\mathscr {Z} \star \overrightarrow{\mathscr{X}} ,\;  \overrightarrow{\mathscr{X}}\in \mathbb{R}^{s\times 1 \times n_{3}} \right\rbrace   $$    	for $\mathscr {Z} \in \mathbb{R}^{n_{1} \times s\times   \times n_{3}}$.\\
   	\noindent Before describing the Tubal-Block-Arnoldi process, let us first introduce the Tubal-QR Factorization.  
   	\noindent  \begin{theorem}(Tubal-QR Factorization)
   		Let $\mathcal{A}=\left[  \overrightarrow{\mathscr{A}}_1,\ldots,\overrightarrow{\mathscr{A}}_{m}\right] \in \mathbb{R}^{n_{1}\times m\times   n_{3}} , \overrightarrow{\mathscr{A}}_i\in \mathbb{R}^{n_{1}\times  1\times   n_{3}}$ for $i=1,\cdots,m$, 
   		there exist orthogonal tensors  $\mathcal{U}\in \mathbb{R}^{n_{1}\times  n_{2}\times   n_{3}} $   and $\mathcal{R}\in \mathbb{R}^{n_{2}\times  n_{2}   \times   n_{3}} $ triangular tensor  such that 
   		\begin{align}\label{QR}
   		\mathcal{A}=\mathcal{Q}\star\mathcal{R} 
   		\end{align}
   		We call  (\ref{QR}) the tubal  QR factorization  of the tensor $\mathcal{A}$.
   	\end{theorem}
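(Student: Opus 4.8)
The plan is to reproduce, for the QR factorization, the Fourier-domain argument already used in the proof of the \textit{t-real schur} decomposition above. The starting point is the block diagonalization \eqref{dft8}, which reads
$$(F_{n_3}\otimes I_{n_1})\,{\rm bcirc}(\mathcal{A})\,(F_{n_3}^{*}\otimes I_{m})={\bf A}={\rm Diag}(\widetilde{\mathscr{A}}^{(1)},\ldots,\widetilde{\mathscr{A}}^{(n_3)}),$$
where the blocks are the frontal slices of $\widetilde{\mathscr{A}}={\tt fft}(\mathcal{A},[\,],3)$. First I would compute an ordinary matrix QR factorization $\widetilde{\mathscr{A}}^{(i)}=\widetilde{Q}^{(i)}\widetilde{R}^{(i)}$ of each frontal slice, with $\widetilde{Q}^{(i)}$ having orthonormal columns and $\widetilde{R}^{(i)}$ upper triangular, and assemble the block-diagonal matrices ${\bf Q}={\rm Diag}(\widetilde{Q}^{(1)},\ldots,\widetilde{Q}^{(n_3)})$ and ${\bf R}={\rm Diag}(\widetilde{R}^{(1)},\ldots,\widetilde{R}^{(n_3)})$, so that ${\bf A}={\bf Q}{\bf R}$.

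Next I would undo the transform exactly as in the Schur proof: the matrices $(F_{n_3}^{*}\otimes I_{n_1})\,{\bf Q}\,(F_{n_3}\otimes I_{n_2})$ and $(F_{n_3}^{*}\otimes I_{n_2})\,{\bf R}\,(F_{n_3}\otimes I_{n_2})$ are block circulant, so folding them up defines tensors $\mathcal{Q}$ and $\mathcal{R}$ whose Fourier slices are $\widetilde{\mathscr{Q}}^{(i)}=\widetilde{Q}^{(i)}$ and $\widetilde{\mathscr{R}}^{(i)}=\widetilde{R}^{(i)}$. Because block-diagonal multiplication in the Fourier domain is precisely the T-product (the equivalence ${\bf C}={\bf A}{\bf B}\Leftrightarrow\mathscr{C}=\mathscr{A}\star\mathscr{B}$ recalled just after the definition of $\star$), the matrix identity ${\bf A}={\bf Q}{\bf R}$ folds back to $\mathcal{A}=\mathcal{Q}\star\mathcal{R}$, which is the asserted factorization \eqref{QR}.

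Then I would verify the two structural claims. For orthogonality, each $\widetilde{Q}^{(i)}$ has orthonormal columns, so ${\bf Q}^{*}{\bf Q}={\bf I}$ block by block in the Fourier domain; since the tensor transpose corresponds to conjugate-transposing every Fourier slice, this is exactly $\mathcal{Q}^{T}\star\mathcal{Q}=\mathscr{I}$ (and, in the square case $n_2=n_1$, also $\mathcal{Q}\star\mathcal{Q}^{T}=\mathscr{I}$, i.e.\ a genuinely orthogonal tensor). For upper triangularity, the key observation is that every Fourier slice $\widetilde{R}^{(i)}$ shares the same zero pattern: whenever $j>l$ the entry $(\widetilde{R}^{(i)})_{jl}$ vanishes for \emph{all} $i$, so the corresponding mode-$3$ tube of $\mathcal{R}$ is the inverse transform of the zero tube and is therefore zero. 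Hence every frontal slice of $\mathcal{R}$ is upper triangular, which matches the tensor definition of a triangular tensor.

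The step I expect to require the most care is ensuring that $\mathcal{Q}$ and $\mathcal{R}$ come out \emph{real}. Since $\mathcal{A}$ is real, the slices of $\widetilde{\mathscr{A}}$ obey the conjugate symmetry $\widetilde{\mathscr{A}}^{(n_3-i+2)}=\overline{\widetilde{\mathscr{A}}^{(i)}}$, and because matrix QR is not unique I must choose the per-slice factorizations so that this symmetry is inherited, e.g.\ by setting $\widetilde{Q}^{(n_3-i+2)}=\overline{\widetilde{Q}^{(i)}}$ and $\widetilde{R}^{(n_3-i+2)}=\overline{\widetilde{R}^{(i)}}$ on conjugate index pairs and taking a real factorization for the self-conjugate slice(s). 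With this choice the inverse FFT returns real tensors, justifying $\mathcal{Q},\mathcal{R}\in\mathbb{R}^{\,\cdot\,\times\,\cdot\,\times n_3}$ and completing the argument; the resulting construction is exactly what the tubal-QR algorithm that follows implements.
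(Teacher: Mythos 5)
Your proof is correct, but it does not follow the route the paper takes for this statement. The paper never actually proves the tubal-QR theorem: its justification is constructive, namely the Gram--Schmidt-type process of Algorithm~\ref{TGS}, which orthogonalizes the lateral slices $\overrightarrow{\mathscr{A}}_j$ in the spatial domain via T-products, using the function \texttt{Normalization1} of \cite{klimer3} to produce each normalized column together with the corresponding tube of $\mathcal{R}$. You instead transplant the Fourier-domain argument that the paper uses to prove the \textit{t-real Schur} decomposition: block-diagonalize ${\rm bcirc}(\mathcal{A})$ via \eqref{dft8}, take a classical matrix QR of each Fourier slice, and fold back. The trade-off is as follows. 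The paper's construction doubles as the algorithm actually invoked later inside the Tubal Block Arnoldi process, but as an existence proof it is incomplete: it tacitly assumes each tube returned by \texttt{Normalization1} is invertible (a non-degeneracy hypothesis on $\mathcal{A}$), and it never verifies that the resulting $\mathcal{Q}$ is orthogonal or that $\mathcal{R}$ is triangular. Your argument yields unconditional existence from the classical matrix QR, checks both structural properties slice by slice, and, importantly, treats the realness of $\mathcal{Q}$ and $\mathcal{R}$ through the conjugate symmetry of the FFT of a real tensor --- a point the paper's own Schur proof passes over in silence, since without matching the per-slice factorizations on conjugate index pairs the folded-back factors need not be real. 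The only flaws you inherit are the statement's own typos ($\mathcal{U}$ versus $\mathcal{Q}$, and $n_2$ versus $m$ in the dimensions), which you resolve consistently.
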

   	 \noindent The Tubal QR factorization   is summarized in  Algorithm \ref{TGS}.\\
   	\begin{algorithm}[!h]
   	\caption{ Tubal-QR Factorization (Tubal-QR)} \label{TGS}
   	\begin{enumerate}
   		\item 	{\bf Input.} $\mathscr{A}\in \mathbb{R}^{n\times m \times n_3}$   		\item Set $[\mathscr{V}_{1},R_{1,1,:}]=  Normalization1(\overrightarrow{\mathscr{A}}_1)$
   		\item For $j=1,\ldots,m$
   		\begin{enumerate}
   			\item $\mathscr{W}= \overrightarrow{\mathscr{A}}_j$,
   			\item for $i=2,\ldots,j-1$
   			\begin{enumerate}
   				\item $\mathscr{R}_{i,j,:}=    \mathscr{V}_i ^T\star \mathscr{W}  $
   				\item $\mathscr{W}=\mathscr{W}-  \mathscr{V}_i\star \mathscr{R}_{i,j,:}$	
   			\end{enumerate}	
   			\item End for
   			\item $[\mathscr{Q}_{j },\mathscr{R}_{j ,j,:}]=  Normalization1(\overrightarrow{\mathscr{W}})$
   		\end{enumerate}
   		\item End
   	\item 	{\bf Output.} 
   	$\mathscr{Q}\in \mathbb{R}^{n\times m \times n_3}$  orthogonal and $\mathscr{R}\in \mathbb{R}^{m\times m \times n_3}$
   	such that :  $\mathcal{A}=\mathcal{Q}\star\mathcal{R} $.
	\end{enumerate}   
\end{algorithm}
   \noindent The function  $Normalization1(\overrightarrow{\mathscr{A}})$ described in \cite{klimer3} allow us to write a given tensor $\overrightarrow{\mathscr{A}}\in \mathbb{R}^{n\times 1 \times n_3}$ as follow :
   $ \overrightarrow{\mathscr{A}}=\overrightarrow{\mathscr{U}}\star{\rm \bf a}$ where ${\rm \bf a}\in \mathbb{R}^{1\times 1 \times n_3}$ is invertible and  $\overrightarrow{\mathscr{U}}^T\star \overrightarrow{\mathscr{U}}={\rm \bf e }$.   \\
   
   	\noindent Tubal-Block-Arnoldi method  have to build an orthonormal basis of $\mathbb{V}^b_m$ of the Krylov subspace  $\mathcal {K}_m^{Block}(\mathscr {A}, \mathscr{V})$.
   
   \begin{algorithm}[h!]
   	\caption{The  Tubal Block  Arnoldi Algorithm    (TBA)}\label{TBA}
 
   	\begin{enumerate}
   	\item 	{\bf Input.}  $\mathcal {A} \in \mathbb{R}^{n_{1}\times   n_{1} \times n_{3}}$ be a square tensor,  $\mathscr {V} \in \mathbb{R}^{n_{1} \times s \times I_{3} }, s<<n_1$ and an integer m.
   		
   		\item Set $[\mathscr{V}^b_{1},\mathscr{H} _{0}]=\text{ Tubal-QR}(\mathscr {V} )$.
   		\item For j = 1,...,m
   		\begin{enumerate}
   		 \item   $\mathcal{W} =\mathcal{A}\star \mathcal{V}_j$    						
   			\item   For i = 1,...,j
   			    	\begin{enumerate}
   				\item  $\mathcal{H}_{i,j}=V_{i}^{bT}\star  \mathcal{W}
   				$ 
   				\item  $\mathcal{W} = \mathcal{W} - {V}_{i}^b\star \mathcal{H}_{i,j}$,
   			\end{enumerate}
   			\item End.
   			\item
   			$\left[ V_{j+1}, \mathcal{H}_{j+1,j}\right]=\text{ Tubal-QR}(\mathcal{W})$. 
   		\end{enumerate}
   		\item End return $\mathbb{V}^b_m$ 
   	\end{enumerate}
   \end{algorithm}
\noindent Using Definition \ref{bloctens0}, we can   put away the  tensors  $\mathcal{H}_{i,j}\in \mathbb{R}^{s\times   s \times n_{3}}$
 into a block tensors  $\mathbb{H}_m $ and  $\mathbb{H}_{m+1} $ defined as follow 
 \begin{equation*}
    \mathbb{H}_{m+1}=\left[ \begin{array}{*{20}{c}}
    \mathcal{H}_{1,1}&{{\mathcal{H}_{1,2} }}&\cdot &\mathcal{H}_{1,m}   \\
    \mathcal{H}_{2,1}&\mathcal{H}_{2,2}&\cdots&\mathcal{H}_{2,m} \\
    &\ddots&\ddots& \vdots\\
    &   &\mathcal{H}_{m,m-1} &  \mathcal{H}_{m,m}\\
    &    &       &    \mathcal{H}_{m+1,m}
    \end{array}  \right] \in  \mathbb{R}^{(m+1)s\times   ms \times n_{3}} ,
    \end{equation*} 
     \begin{equation*}
     \mathbb{H}_m=\left[ \begin{array}{*{20}{c}}
    \mathcal{H}_{1,1}&{{\mathcal{H}_{1,2} }}&\cdot &\mathcal{H}_{1,m}   \\
    \mathcal{H}_{2,1}&\mathcal{H}_{2,2}&\cdots&\mathcal{H}_{2,m} \\
    &\ddots&\ddots& \vdots\\
    &   &\mathcal{H}_{m,m-1} &  \mathcal{H}_{m,m} 
    \end{array}  \right] \in  \mathbb{R}^{ms\times   ms \times n_{3}}.
 \end{equation*}   
   \noindent It is not difficult to show that after m steps of Algorithm \ref{TBA},  the tensor   $\mathbb{V}_{m}^b:=[\mathscr{V}^b_{1},\ldots,\mathscr{V}^b_{m}]\in  \mathbb{R}^{n_1\times   ms \times n_{3}}$, where $\mathscr{V}^b_{i}\in \mathbb{R}^{n_1\times   s \times n_{3}}$  form an orthonormal basis of the tensor Block  Krylov  subspace   
   $\mathcal {K}_m^{Block}(\mathscr {A}, \mathscr{V})$.\\
   \noindent It is easy to see that $\mathbb{H}_m$ can be obtained from 
   $\mathbb{H}_{m+1}$ by deleting the last block row 
    $$[\mathscr{O} _{ssn_3},\ldots,\mathscr{O} _{ssn_3},\mathscr{H} _{m+1,m}] =\mathcal {H}_{m+1,m}\star \mathbb{E}_{m} \;\in  \mathbb{R}^{s \times   ms \times n_{3}}$$ where $\mathscr{O}_{ssn_3}$ denote the zeros tensors of size $(s\times s\times n_3)$ which all
its entries are equal to zeros, and  $\mathbb{E}_{m}=\left[\mathscr{O} _{ssn_3},\ldots,\mathscr{O} _{ssn_3},\mathscr{I} _{ssn_3}\right]\in \mathbb{R}^{s \times   ms \times n_{3}} $ \\
  \noindent Let   
  \begin{align*}\mathscr{A}\star \mathbb{V}^b_{m}:&=[\mathscr{A}\star  \mathscr{V}_{1} ,\ldots, \mathscr{A}\star \mathscr{V}_{m}]\in \mathbb{R}^{n_1 \times   ms \times n_{3}},\\
  \mathbb{V}_{m+1}^b:&=\left[\mathbb {V}_m^b , \mathcal {V}_{m+1}^b\right]\in  \mathbb{R}^{n_1\times   (m+1)s \times n_{3}},\\
  \mathbb{H}_{m}&= (\mathcal {H}_{i,j})_{1\leq i,j\leq m}\in\mathbb{R}^{ ms \times   \times  ms\times n_3},\\
  {\mathbb{H}}_{m+1}&=\begin{bmatrix}
  	\mathbb {H}_m \\
  	\mathcal {H}_{m+1,m}\star \mathbb{E}_{m} 
  \end{bmatrix}\in\mathbb{R}^{(m+1)s\times   ms \times n_{3} }.\\
  \end{align*}

  \noindent With the above notations, we can easily prove the  results of the following proposition.
   \begin{proposition}\label{TBarnoldi}
   	From Algorithm \ref{TBA}, we have
   	 \begin{equation}\label{for1}
   	  \mathcal{A}\star\mathbb{V}_{m}^b= \mathbb{V}^b_{m}\star \mathbb{H}_{m}+ \mathcal{V}^b_{m+1}\star(\mathscr{H}_{m+1,m}\star \mathbb{E}_{m} ),
   	 \end{equation}
   	 \begin{equation}\label{for2}
   	 \mathbb{V}_{m}^{bT}\star\mathcal{A}\star\mathbb{V}^b_{m}= \mathbb{H}_{m}, \end{equation}
   	 \begin{equation}\label{for3}
   	 \mathbb{V}_{m+1}^{bT}\star\mathcal{A}\star\mathbb{V}^b_{m}=\mathbb{H}_{m+1}, \end{equation}
   	 \begin{equation}\label{for4}
   	 \mathbb{V}_{m}^{bT}\star \mathbb{V}^b_{m}=\mathscr{I}_{ms } \end{equation}
   	 where $\mathscr{I}_{ms }\in \mathbb{R}^{ms\times   ms \times n_{3} },$ denote the identity tensor.
   	 \end{proposition}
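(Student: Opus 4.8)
The plan is to derive all four identities from the single block-column recurrence enforced by the inner loop of Algorithm \ref{TBA}, and then to assemble them by block bookkeeping using Proposition \ref{propoblock}. First I would read off from steps 3(a)--3(d) of Algorithm \ref{TBA} that, at each $j$, the working tensor $\mathcal{W}$ is initialized as $\mathcal{A}\star\mathscr{V}_j^b$, is reduced by subtracting $\mathscr{V}_i^b\star\mathcal{H}_{i,j}$ for $i=1,\ldots,j$, and the remainder is then factored by Tubal-QR as $\mathscr{V}_{j+1}^b\star\mathcal{H}_{j+1,j}$. Rearranging these steps yields the fundamental per-column relation
$$\mathcal{A}\star\mathscr{V}_j^b=\sum_{i=1}^{j+1}\mathscr{V}_i^b\star\mathcal{H}_{i,j},\qquad j=1,\ldots,m,$$
which is the backbone of the whole proposition.

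Second, I would assemble relation \eqref{for1}. Writing $\mathcal{A}\star\mathbb{V}_m^b=[\mathcal{A}\star\mathscr{V}_1^b,\ldots,\mathcal{A}\star\mathscr{V}_m^b]$ and substituting the per-column relation, the block T-product distribution rules of Proposition \ref{propoblock} let me recognize the partial sum $\sum_{i=1}^{m}\mathscr{V}_i^b\star\mathcal{H}_{i,j}$ over the first $m$ basis blocks as the $j$-th block column of $\mathbb{V}_m^b\star\mathbb{H}_m$, while the single leftover term $\mathscr{V}_{m+1}^b\star\mathcal{H}_{m+1,m}$ occurs only in the last block column. The selection tensor $\mathbb{E}_m$ is exactly the device that places it there: $\mathcal{H}_{m+1,m}\star\mathbb{E}_m$ has $\mathcal{H}_{m+1,m}$ in its last block and zeros elsewhere, so $\mathscr{V}_{m+1}^b\star(\mathcal{H}_{m+1,m}\star\mathbb{E}_m)$ reproduces the correction term and gives \eqref{for1}.

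Third, I would establish the orthonormality identity \eqref{for4}. Each call to Tubal-QR returns an orthogonal tensor, so every block satisfies $(\mathscr{V}_i^b)^T\star\mathscr{V}_i^b=\mathscr{I}_{ss}$ (the $s\times s\times n_3$ identity tensor); meanwhile the subtraction $\mathcal{W}\leftarrow\mathcal{W}-\mathscr{V}_i^b\star\mathcal{H}_{i,j}$ with $\mathcal{H}_{i,j}=(\mathscr{V}_i^b)^T\star\mathcal{W}$ forces $(\mathscr{V}_i^b)^T\star\mathscr{V}_j^b=\mathscr{O}$ for $i\neq j$. Gathering the diagonal and off-diagonal block identities produces $\mathbb{V}_m^{bT}\star\mathbb{V}_m^b=\mathscr{I}_{ms}$; the same computation shows $\mathbb{V}_m^{bT}\star\mathscr{V}_{m+1}^b=\mathscr{O}$ and that $\mathbb{V}_{m+1}^{bT}\star\mathscr{V}_{m+1}^b$ has its only nonzero block $\mathscr{I}_{ss}$ in the last slot, both of which I will need below.

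Finally, relations \eqref{for2} and \eqref{for3} follow by left-multiplying \eqref{for1} by $\mathbb{V}_m^{bT}$ and $\mathbb{V}_{m+1}^{bT}$ and invoking \eqref{for4}. For \eqref{for2} the first term collapses to $\mathscr{I}_{ms}\star\mathbb{H}_m=\mathbb{H}_m$ and the correction term vanishes since $\mathbb{V}_m^{bT}\star\mathscr{V}_{m+1}^b=\mathscr{O}$; for \eqref{for3} the product $\mathbb{V}_{m+1}^{bT}\star\mathbb{V}_m^b$ is $\mathbb{H}_m$ stacked over a zero block while the correction term contributes $\mathcal{H}_{m+1,m}\star\mathbb{E}_m$ in the bottom slot, and the two stack into $\mathbb{H}_{m+1}$ by its definition. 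I expect the main obstacle to be the orthonormality step \eqref{for4}: one must verify that Tubal-QR genuinely yields columns that are orthonormal under the T-product itself, and that the modified Gram--Schmidt subtraction really annihilates the components in the associated T-product inner product, rather than merely slice-wise in the Fourier domain; once this is secured, the remaining steps are routine block algebra through Proposition \ref{propoblock}.
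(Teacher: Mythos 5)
Your proposal is correct and takes essentially the same route as the paper: the paper's proof is a one-line remark that the identities ``come directly from steps of Algorithm \ref{TBA} and Proposition \ref{propoblock},'' and your argument simply fills in those details (the per-column recurrence $\mathcal{A}\star\mathscr{V}_j^b=\sum_{i=1}^{j+1}\mathscr{V}_i^b\star\mathcal{H}_{i,j}$, the block assembly via Proposition \ref{propoblock}, orthonormality from Tubal-QR and the Gram--Schmidt subtractions, and left-multiplication to get \eqref{for2}--\eqref{for3}). Your writeup is in fact more complete than the paper's, and your closing caveat about verifying T-product orthonormality from Tubal-QR is precisely the point the paper leaves implicit.
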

  \medskip
  \noindent Notice that in  the case  $n_3=1$, Algorithm \ref{TBA} reduces to the well known block   Arnoldi process. 
  \begin{proof}
  	     The proof come directly from steps of Algorithm \ref{TBA} and Proposition \ref{propoblock}. 
  \end{proof}
   
    \subsection{Tubal-Block-Arnoldi method for solving large Sylvester tensor equations (TBAS)}
     \noindent This subsection discusses the computation of an approximate solution of the tensor equations  : 
     \begin{equation}\label{syslintens}
     {\mathcal M} (\mathscr{X}) = \mathscr{C},
     \end{equation}
     where  ${\mathcal M}$ is a linear operator  that could be described as 
     \begin{equation}\label{eq21}
     {\mathcal M} (\mathscr{X}) = \mathscr{A} \star\mathscr{X}-\mathscr{X} \star\mathscr{B}
     \end{equation}
     where  $\mathscr{A}\in \mathbb{R}^{n \times n \times n_{3}},\mathscr{B}\in \mathbb{R}^{q \times q \times n_{3}}$ and $\mathscr{X},\mathscr{C}\in \mathbb{R}^{n \times q \times n_{3}}$ respectively. $\mathscr{A},\mathscr{B}$ and $\mathscr{C}$ are  given, $\mathscr{X}$ the unknown tensor to  be determined.\\
     \noindent Let $\mathscr{  {X}}_{0}\in \mathbb{R}^{n\times s\times p}$ be an arbitrary initial guess with   the corresponding  residual
     $\mathscr{R}_0=\mathscr{C}-\mathcal{M}( \mathscr{X}_0) $.    The purpose of tensor Tubal Block Arnoldi method for solving large Sylvester tensor equations  (TBAS) method is to find and approximate solution  $\mathscr{X}_{m}$ of the exact solution $\mathscr{X}^*$ of    \eqref{syslintens}  such that 
     \begin{equation}
     \label{TBAS}
     \mathscr{X}_{m}-\mathscr{X}_{0}\in  \mathcal {K}_m^{Block}(\mathcal{M},\mathscr{R}_0), 
     \end{equation} 
     with the classical orthogonality  property 
     \begin{equation}
     \label{TBASortho} 
      \mathscr{R}_{m}  =   \mathscr{C}-\mathcal{M}(\mathscr{X}_m)  \perp  \mathcal {K}_m^{Block}(\mathcal{M},\mathscr{R}_0).
     \end{equation} 
      \medskip
      \noindent  Using the  fact that $\mathcal {K}_m^{Block}(\mathcal{M},\mathscr{R}_0)=\mathcal {K}_m^{Block}(\mathscr{A},\mathscr{R}_0)$ (which not difficult to prove), the relations  (\ref{TBAS}) and (\ref{TBASortho} ) can be expressed as 
       \begin{equation}
       \label{TBAS1}
       \mathscr{X}_{m}-\mathscr{X}_{0}\in  \mathcal {K}_m^{Block}(\mathscr{A},\mathscr{R}_0), 
       \end{equation} 
       with the classical orthogonality  property 
       \begin{equation}
       \label{TBASortho1} 
       \mathscr{R}_{m}  =   \mathscr{C}-\mathcal{M}(\mathscr{X}_m)  \perp  \mathcal {K}_m^{Block}(\mathscr{A},\mathscr{R}_0).
       \end{equation} 
       \medskip
       \noindent Since   $\mathbb{V}^b_m$ (defined earlier) form an  orthonormal basis of  of the Krylov subspace  $\mathcal {K}_m^{Block}(\mathscr {A}, \mathscr{R}_0)$. The relations  (\ref{TBAS1}) and (\ref{TBASortho1}) can be written as
       \begin{equation}
       \label{TBAS12}
       \mathscr{X}_{m}=\mathscr{X}_{0}+\mathbb{V}^b_m\star \mathscr{Y}_{m}, \quad \text{ with }  \mathscr{Y}_{m}\in \mathbb{R}^{ms\times s\times n_3}, 
       \end{equation} 
       and 
       \begin{equation}
       \label{TBASortho12} 
       \mathbb{V}^{bT}_m\star \mathscr{R}_{m}  = 0.
       \end{equation} 
      \noindent Using the fact that $\mathscr{R}_{m}  =    \mathscr{C}-\mathcal{M}(\mathscr{X}_m)=\mathscr{C}-\mathscr{A}\star \mathscr{X}_m+\mathscr{X}_m\star \mathscr{B}$,   the relations (\ref{TBAS12}) and (\ref{TBASortho12}), we get  the low
      dimensional equation 
      \begin{equation}
      \label{TBslowrank} 
      \mathbb{H}_m\star \mathscr{Y}_m-\mathscr{Y}_m\star \mathscr{B}=\mathscr{C}_1
      \end{equation} 
      where $\mathscr{C}_1=\mathbb{V}^{bT}_m\star \mathscr{R}_{0} $.\\
      \noindent The tensor equation (\ref{TBslowrank})   will be solved
      by using the  \textit{t-Bartels-Stewart} method.
      \noindent
      \begin{proposition}\label{reidue}
      	 At  step m, the  residual  norm of  $\mathscr{R}_{m}  =    \mathscr{C}-\mathcal{M}(\mathscr{X}_m)=\mathscr{C}-\mathscr{A}\star \mathscr{X}_m+\mathscr{X}_m\star \mathscr{B}$  with
      	 $\mathscr{X}_{m}=\mathscr{X}_{0}+\mathbb{V}^b_m\star \mathscr{Y}_{m}$  can be expressed  as follows :
      	 \begin{equation}
      	      ||\mathscr{R}_{m}||_F=  ||\mathscr{H}_{m+1,m}\star\mathbb{E}_{m} \star \mathscr{Y}_{m}  ||_F
      	 \end{equation}
      	 where $\mathbb{E}_{m}=\left[\mathscr{O} _{ssn_3},\ldots,\mathscr{O} _{ssn_3},\mathscr{I} _{ssn_3}\right]\in \mathbb{R}^{s \times   ms \times n_{3}} $ and $\mathscr{Y}_{m}$ solution of equation  (\ref{TBslowrank}).
      \end{proposition}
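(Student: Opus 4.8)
The plan is to mimic, in the T-product setting, the classical derivation of the GMRES/FOM residual formula: substitute the Arnoldi-type decomposition into the residual, cancel the in-subspace component using the low-dimensional Galerkin equation, and isolate the single out-of-subspace term. First I would insert $\mathscr{X}_{m}=\mathscr{X}_{0}+\mathbb{V}^b_m\star \mathscr{Y}_{m}$ from \eqref{TBAS12} into $\mathscr{R}_{m}=\mathscr{C}-\mathscr{A}\star\mathscr{X}_m+\mathscr{X}_m\star\mathscr{B}$ and use the distributivity of $\star$ over block tensors (Proposition \ref{propoblock}) together with the definition $\mathscr{R}_0=\mathscr{C}-\mathscr{A}\star\mathscr{X}_0+\mathscr{X}_0\star\mathscr{B}$ to obtain
\begin{equation*}
\mathscr{R}_{m}=\mathscr{R}_{0}-\mathscr{A}\star\mathbb{V}^b_m\star\mathscr{Y}_m+\mathbb{V}^b_m\star\mathscr{Y}_m\star\mathscr{B}.
\end{equation*}

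Next I would apply the Tubal-Block-Arnoldi relation \eqref{for1} from Proposition \ref{TBarnoldi}, namely $\mathscr{A}\star\mathbb{V}_{m}^b=\mathbb{V}^b_{m}\star\mathbb{H}_{m}+\mathscr{V}^b_{m+1}\star(\mathscr{H}_{m+1,m}\star\mathbb{E}_{m})$, to rewrite $\mathscr{A}\star\mathbb{V}^b_m\star\mathscr{Y}_m$ and split the residual into an ``in-subspace'' part carried by $\mathbb{V}^b_m$ and an ``out-of-subspace'' part carried by $\mathscr{V}^b_{m+1}$. Since $\mathscr{R}_0$ lies in the range of the first basis block (the TBA process starts from $\text{Tubal-QR}(\mathscr{R}_0)$), I would use $\mathscr{R}_0=\mathbb{V}^b_m\star\mathscr{C}_1$ with $\mathscr{C}_1=\mathbb{V}^{bT}_m\star\mathscr{R}_0$ and factor $\mathbb{V}^b_m$ out to get
\begin{equation*}
\mathscr{R}_{m}=\mathbb{V}^b_m\star\bigl(\mathscr{C}_1-\mathbb{H}_m\star\mathscr{Y}_m+\mathscr{Y}_m\star\mathscr{B}\bigr)-\mathscr{V}^b_{m+1}\star\mathscr{H}_{m+1,m}\star\mathbb{E}_{m}\star\mathscr{Y}_m.
\end{equation*}
Because $\mathscr{Y}_m$ solves the reduced equation \eqref{TBslowrank}, i.e. $\mathbb{H}_m\star\mathscr{Y}_m-\mathscr{Y}_m\star\mathscr{B}=\mathscr{C}_1$, the first bracket vanishes, leaving $\mathscr{R}_{m}=-\,\mathscr{V}^b_{m+1}\star\mathscr{H}_{m+1,m}\star\mathbb{E}_{m}\star\mathscr{Y}_m$.

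Finally I would take Frobenius norms; the sign is irrelevant. The remaining point is that left multiplication by the orthonormal block $\mathscr{V}^b_{m+1}$ (which satisfies $\mathscr{V}^{bT}_{m+1}\star\mathscr{V}^b_{m+1}=\mathscr{I}_{ss}$, by construction via $\text{Normalization1}$ and Tubal-QR) is an isometry for $\|\cdot\|_F$, so $\|\mathscr{V}^b_{m+1}\star\mathscr{Z}\|_F=\|\mathscr{Z}\|_F$, giving the claimed identity $\|\mathscr{R}_m\|_F=\|\mathscr{H}_{m+1,m}\star\mathbb{E}_m\star\mathscr{Y}_m\|_F$. I expect this isometry to be the only step needing genuine justification rather than bookkeeping: it can be proven either by passing to the Fourier domain, where $\star$ becomes slice-wise matrix multiplication and an orthogonal tensor has unitary frontal slices, so that Parseval's identity preserves the norm, or directly from the adjoint relation $\langle\mathscr{A}\star\mathscr{B},\mathscr{C}\rangle=\langle\mathscr{B},\mathscr{A}^T\star\mathscr{C}\rangle$ applied with $\mathscr{A}=\mathscr{V}^b_{m+1}$. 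The rest of the argument is routine once associativity and the block-distributivity of the T-product (Proposition \ref{propoblock}) are invoked.
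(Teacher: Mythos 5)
Your proposal is correct and takes essentially the same route as the paper's own proof: expand $\mathscr{R}_m$ using $\mathscr{X}_{m}=\mathscr{X}_{0}+\mathbb{V}^b_m\star \mathscr{Y}_{m}$, substitute the Arnoldi relation \eqref{for1}, cancel the in-subspace component via the reduced equation \eqref{TBslowrank}, and invoke orthogonality of $\mathscr{V}^b_{m+1}$ to pass to the norm. You actually make explicit two points the paper's terse proof glosses over --- that $\mathscr{R}_0=\mathbb{V}^b_m\star\mathscr{C}_1$ (from the Tubal-QR initialization of Algorithm \ref{TBA}) and that left multiplication by the orthonormal block $\mathscr{V}^b_{m+1}$ is a Frobenius-norm isometry --- which strengthens rather than alters the argument.
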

      
 \begin{proof}
 	  \noindent At  step m, the  residual $\mathscr{R}_{m}  =    \mathscr{C}-\mathcal{M}(\mathscr{X}_m)=\mathscr{C}-\mathscr{A}\star \mathscr{X}_m+\mathscr{X}_m\star \mathscr{B}$  with
 	 $\mathscr{X}_{m}=\mathscr{X}_{0}+\mathbb{V}^b_m\star \mathscr{Y}_{m}$  can be written  as :
 	 \begin{equation*}
 	 \mathscr{R}_m=\mathscr{R}_0-\mathscr{A}\star\mathscr{V}_m\star \mathscr{Y}_m +\mathscr{V}_m\star\mathscr{Y}_m\star \mathscr{B}.
 	 \end{equation*}
 	   Then, using  the fact  that  $ \mathcal{A}\star\mathbb{V}_{m}^b= \mathbb{V}^b_{m}\star \mathbb{H}_{m}+ \mathcal{V}^b_{m+1}\star(\mathscr{H}_{m+1,m}\star \mathbb{E}_{m} )$, the relation (\ref{TBslowrank} )  and the fact  that the  tensor $\mathscr{V}_{m+1}^b$ is orthogonal, we get     
 	 \begin{equation*}
 	 ||\mathscr{R}_{m}||_F=  ||\mathscr{H}_{m+1,m}\star\mathbb{E}_{m} \star \mathscr{Y}_{m}  ||_F
 	 \end{equation*}
 	\end{proof}

      \medskip
      \noindent To save memory and CPU-time requirements, the Tubal Block Arnoldi  method for solving large Sylvester tensor equation
       (TBAS) will be used in a restarted mode. This means
      that we have to restart the algorithm every m inner iterations, where m is a fixed integer. The restarted Tubal Block Arnoldi
      algorithm for solving (\ref{syslintens}), denoted by TBAS(m), is summarized as follows:\\	
      \begin{algorithm}[h!]
      	\caption{The  Tubal-Block-Arnoldi for solving large Sylvester tensor equation   TBAS(m)}  \label{Tbasm}
      	\begin{enumerate}
      		\item 	{\bf Input.} $\mathscr{A} \in \mathbb{R}^{n\times n \times n_3}$, $\mathscr{V}$, $\mathscr{B} \in \mathbb{R}^{q\times q \times n_3}$, $\mathscr{X}_{0},\mathscr{C}\in \mathbb{R}^{n\times q \times n_3}$, the maximum number of iteration  $\text{Iter}_{\text{max}} $  an integer $m$ and a tolerance  $tol$.
      		\item  Compute $\mathscr{R}_0=\mathscr{C}-\mathcal{M}( \mathscr{X}_0) $.
      		\item For $k=1,\ldots,\text{Iter}_{\text{max}}$
      		\begin{enumerate}
      			\item  Apply Algorithm \ref{TBA} to compute  $\mathbb{V}_{m}^b$ and  $\mathbb{  {H}}_m$ .

      			\item  Apply Algorithm \ref{t-bartels} to solve  the problem : $\mathbb{H}_m\star \mathscr{Y}_m-\mathscr{Y}_m\star \mathscr{B}=\mathbb{V}^{bT}_m\star \mathscr{R}_{0} $
      			\item Compute  $\mathscr{X}_{m}=\mathscr{X}_{0}+\mathbb{V}_{m}^b \star \mathscr{Y}_m  $
      		\end{enumerate}
      		\item If $||\mathscr{R}_{m}||_F<tol$, stop 
      		\item else   $\mathscr{X}_{0}=\mathscr{X}_{m}$ and go to Step 2.
      		\item 	{\bf Output.} $\mathscr{X}_m\in \mathbb{R}^{n\times q \times n_3}$ the approximate solution of  (\ref{syslintens}).   
      		
      	\end{enumerate}
      \end{algorithm}
    \medskip 
    \noindent During the computation of the approximate solution $\mathscr{X}_m$   of  (\ref{syslintens}), we assume that  $\varGamma({\bf H})_m\cap\varGamma({\bf- B})=\O{}$,   	where the block diagonal matrix $\bf H_m $ and $-\bf B$  are defined by \eqref{dft9}, and $\varGamma({\bf H}_m), \varGamma(-{\bf B})$ denotes the set
    of eigenvalues of the matrix {\bf H}$_m$ and {\bf- B} respectively.

   	\section{Numerical experiments }
   	\medskip
   	\noindent This section performs some numerical tests for  the  Tensor Tubal-Global GMRES  and  Tensor Tubal-Global Golub Kahan   methods to solve the linear tensor problem \eqref{eq1}.  All computations were carried out using the MATLAB R2018b environment with  an  Intel(R)  Core i7-8550U CPU $@ 1.80$ GHz  and processor  8 GB. 
   	 \noindent  The   stopping criterion was   $$ ||\mathscr{R}_k||_{F}  <\epsilon,  $$ where $\epsilon=10^{-6}$ is a chosen tolerance and $\mathscr{R}_k$ the m-th residual associated to    the approximate solution  $\mathscr{X}_k$.  In all the presented tables, we reported the obtained residual norms to achieve the desired convergence, the iteration number and the corresponding cpu-time. \\
   	 \noindent  \noindent We will compare the results in our method to the results obtained by solving the equivalent problem   ${\bf C}= {\bf A}  {\bf X}-{\bf X}  {\bf B}$ where 
   	 ${\bf A}$,  ${\bf B}$ and  ${\bf C}$ are  the matrices 
   	 \begin{align*}\label{dft9m}
   	 {\bf A}&= {\rm Diag}(\widetilde {\mathscr {A}})= \left (
   	 \begin{array}{cccc}
   	 \widetilde {\mathscr {A}}^{(1)}& &&\\
   	 & \widetilde {\mathscr {A}}^{(2)}&&\\
   	 &&\ddots&\\
   	 &&&\widetilde {\mathscr {A}}^{(n_3)}\\
   	 \end{array}
   	 \right)={\rm BlockDiag}(\widetilde {\mathscr {A}}^{(1)},\ldots,\widetilde {\mathscr {A}}^{(n_3)}),\\
   	 {\bf B}&= {\rm Diag}(\widetilde {\mathscr {B}})={\rm BlockDiag}(\widetilde {\mathscr {B}}^{(1)},\ldots,\widetilde {\mathscr {B}}^{(n_3)})\\
   	 {\bf C}&= {\rm Diag}(\widetilde {\mathscr {C}})={\rm BlockDiag}(\widetilde {\mathscr {C}}^{(1)},\ldots,\widetilde {\mathscr {C}}^{(n_3)})
   	 \end{align*}
   	 and the matrices $\widetilde {\mathscr {A}}^{(i)}$,$\widetilde {\mathscr {B}}^{(i)}$ and  $\widetilde {\mathscr {C}}^{(i)}$ 's are the frontal slices of the tensor $\widetilde {\mathscr {A}}$, $\widetilde {\mathscr {B}}$ and $\widetilde {\mathscr {C}}$ respectively.\\
   	 \noindent Notice  that ${\bf A}$ is a matrix of size $(n_0\times n_0 )$,
   	 $n_0=n\times n_3$,  ${\bf B}$ is a matrix of size $(s_0\times s_0) $,
   	 $s_0=s\times n_3$ and ${\bf C}$ is a matrix of size $(n_0\times s_0) $ . \\
   	 \noindent We compared the required CPU-times (in seconds) to achieve the convergence for the  two methods:
   	 \begin{enumerate}
   	 	\item \textbf{TBAS}: The tubal block Arnoldi–Sylvester  method. 
   	 	\item  \textbf{BAS}  :  Resolution of  Sylvester matrix equation :     ${\bf C}= {\bf A}  {\bf X}-{\bf X}  {\bf B}$   by  using  the   block Arnoldi-Sylvester method introduced in \cite{elguennouni}.
   	 	\item  \textbf{tFOM}  : The tensor  full orthogonalization method.
   	 	 \item  \textbf{tGMRES}  : The tensor generalized minimal residual method. 
   	 \end{enumerate}
  
   In Table 1, we reported the obtained relative residual norms, the total number of required iterations to achieve the convergence and the corresponding cpu-times for   restarted Tubal Block Arnoldi(m).    	Consider
   the convection-diffusion equation:

   \begin{equation}\label{chaleur}
    \begin{cases} 
    -\mu\;\Delta u+c^T\nabla \;u=f  \;\; &\text{in}\;\; \left[ 0 , 1\right]^N\\
    u =0  & \text{in}\;\; \partial\varOmega
   \end{cases}
   \end{equation}

    The tensor $\mathcal{  {A}},\mathcal{  {B}}$ are  obtained  using $n_3$ frontal  slices (which are   obtained  from a standard finite difference discretization of (\ref{chaleur})). In  this  example,  the frontal  slices are  of  size $n\times n$ and  $s\times s$, given as  follows $$\mathcal{  {A}}^{(i)} =\frac{\mu}{h^2_1}{\rm tridiag} ( -1 ,2,-1  )+\frac{a_i}{ 4h_1}\left[ \begin{array}{*{20}{c}}
3&-5&1 &   &  \\
1&3&-5&\ddots &  \\
&\ddots&\ddots& \ddots&1  \\
& & 1&3 &-5  \\
&    &  &   1&3  
\end{array}  \right], $$
and  
$$\mathcal{  {B}}^{(i)} =\frac{\mu}{h^2_2}{\rm tridiag} ( -1 ,2,-1  )+\frac{b_i}{ 4h_2}\left[ \begin{array}{*{20}{c}}
	3&-5&1 &   &  \\
	1&3&-5&\ddots &  \\
	&\ddots&\ddots& \ddots&1  \\
	& & 1&3 &-5  \\
	&    &  &   1&3  
\end{array}  \right]. $$For $i=1,\ldots,n_3$, we set $a_i=i, b_i= n_3+i, h_1=\frac{1}{n+1}$ and  $h_2=\frac{1}{s+1}$.  \\
\noindent In this example, the
right-hand side tensor $\mathcal {C}$ is  constructed    using the Matlab  command  $\mathcal{  {C}}={\rm rand}(n,s,n_3).$\\
\begin{table}[!htbp]
	\caption{Results for Example 2. $\epsilon=10^{-6}$,  and  $n_3=2$}\label{tab241}
	\begin{center}
		\begin{tabular}{lcccc}
			\hline $n \backslash s\backslash k$ & Method  & $\#$ its. & $ ||\mathscr{R}_k||_{F} $    & cpu-time in seconds \\
			\hline   & \textbf{TBAS}   &11 & $5.55\times 10^{-7}$  &  \textbf{3.03}\\
	        $1000  \backslash 3  \backslash  10$&\textbf{BAS}\cite{elguennouni}   &65 &$7.18\times 10^{-7}$&7.22  \\
	         & \textbf{tFOM}  &  66 & $7.57\times 10^{-7}$  & \textbf{57.69}\\
	          & \textbf{tGMRES} & 39 & $9.50\times 10^{-7}$  & \textbf{ 25.34}\\
		 	\hline     & \textbf{TBAS}  &  12 & $6.86\times 10^{-8}$  & \textbf{12.02}\\
		$2000  \backslash  3  \backslash  6$&	\textbf{BAS}\cite{elguennouni}  & 69 &$  3.25\times 10^{-7}$& 17.61 \\
		 & \textbf{tFOM}  &  69 & $6.91\times 10^{-7}$  & \textbf{300.16}\\
		  & \textbf{tGMRES}  &  41 & $7.38\times 10^{-7}$  & \textbf{ 108.05}\\
	 
			 \hline  
		\end{tabular}
	\end{center}
\end{table}
\medskip \noindent Table \ref{tab241} reports on the obtained relative residual norms and the corresponding cpu-times to obtain the desired convergence. As can be seen from this table, the restarted  Tensor Tubal block Arnoldi method (TBAS)(m) gives good results with a small cpu-times.

 	\section*{Conlusion}   
 		In this paper, we introduced  new tensor krylov subspace    methods  for solving large Sylvester   tensor equations. The proposed method uses the well-known T-product for tensors and tensor subspaces.    We developed some new tensor products and the related algebraic properties. These new products will lead us to develop   third-order the tensor FOM (tFOM), tensor GMRES (tGMRES),  tubal Block Arnoldi and   the tensor tubal Block Arnoldi method to solve large Sylvester tensor equation . We give some properties related to these method. The numerical experiments show that the restarted Tensor Tubal block Arnoldi method TBAS(m) gives the best results with a small cpu-times.
 		\vspace{0.3cm}
   	\\ \textbf{Author contribution } All three authors have contributed in the same way.
  	\vspace{0.3cm}
  	\\  \textbf{Funding } None.
   	\vspace{0.3cm}
\\ \textbf{Data availability } Data sharing not applicable to this article as no data sets were generated or analyzed
  during the current study. We just generalized existing programs.\\
     	\vspace{0.3cm}
 \section*{Declarations}$ $\\
  \textbf{Ethical approval } Not applicable. External Review board.	\vspace{0.3cm}
  \\
 \textbf{ Competing interests } The authors declare no competing interests.	\vspace{0.3cm}
 \\
\textbf{  Conflict of interest } The authors declare that they have no conflict of interest.   			\vspace{0.3cm}
\\

   \end{document}